\documentclass[pdflatex,sn-mathphys-num]{sn-jnl}
\usepackage[utf8]{inputenc}
\usepackage{amsmath,amssymb,amsfonts,amsthm}

\usepackage{pgfplots}
\pgfplotsset{compat=newest}
\pgfplotsset{plot coordinates/math parser=false}

\usepackage{hyperref}
\usepackage[capitalize,nameinlink]{cleveref}
\usepackage{dsfont}
\usepackage{todonotes}
\usepackage{multicol}
\usepackage{enumitem}
\usepackage{graphicx} 
\usepackage{mathtools}
\usepackage{xspace}
\usepackage{nicefrac}
\graphicspath{ {images/} }
\allowdisplaybreaks
\usepackage{aliascnt}

\newtheorem{theo}{Theorem}[section]

\newaliascnt{defi}{theo}
\newtheorem{defi}[defi]{Definition}
\aliascntresetthe{defi}

\newaliascnt{exa}{theo}

\aliascntresetthe{exa}

\newaliascnt{cor}{theo}

\aliascntresetthe{cor}

\newaliascnt{lem}{theo}
\newtheorem{lem}[lem]{Lemma}
\aliascntresetthe{lem}

\newaliascnt{prop}{theo}

\aliascntresetthe{prop}

\newaliascnt{ass}{theo}
\newtheorem{ass}[ass]{Assumption}
\aliascntresetthe{ass}

\newaliascnt{rem}{theo}
\newtheorem{rem}[rem]{Remark}
\aliascntresetthe{rem}

\newaliascnt{notation}{theo}
\newtheorem{notation}[notation]{Notation}
\aliascntresetthe{notation}

\crefname{theo}{Thm.}{Theorems}
\crefname{ass}{Asm.}{Assumptions}
\crefname{defi}{Def.}{Definitions}
\crefname{rem}{Rmk.}{Remarks}
\crefname{prop}{Prop.}{Propositions}
\crefname{lem}{Lem.}{Lemmata}

\usepackage{xcolor}
\newcommand{\scale}[1]{\scalebox{0.6}{\textcolor{black}{$#1$}}}

\usepackage{accents}
\newcommand{\R}{\mathbb{R}}
\newcommand{\N}{\mathbb{N}}

\newcommand{\mC}{\mathcal C}
\newcommand{\mD}{\mathcal D}

\newcommand{\mL}{\mathcal{L}}
\newcommand{\mTV}{\mathcal{TV}}
\newcommand{\mBV}{\mathcal{BV}}
\newcommand{\dd}{\ensuremath{\,\mathrm{d}}}
\newcommand{\loc}{\textnormal{loc}}
\newcommand{\mW}{\mathcal{W}}
\DeclareMathOperator*{\esssup}{ess-\sup}
\DeclareMathOperator{\supp}{supp}
\DeclareMathOperator*{\essinf}{ess-\inf}

\newcommand{\eps}{\epsilon}

\newcommand{\rtilde}[1]{\textcolor{black}{\tilde{\textcolor{black}{#1}}}}
\newcommand{\tp}{\scale{+}}
\newcommand{\tm}{\scale{-}}
\newcommand{\tk}{\scale{k}}
\newcommand{\tn}{\scale{0}}
\newcommand{\te}{\scale{1}}
\newcommand{\tz}{\scale{2}}
\newcommand{\ts}{\scale{*}}
\newcommand{\tR}{\scale{R}}
\newcommand{\tmin}{\scale{\mathrm{min}}}
\newcommand{\tmax}{\scale{\mathrm{max}}}
\newcommand{\teps}{{\scale{\eps}}}
\newcommand{\tnu}{\scale{\nu}}

\newcommand{\Wp}{\mathsf{W}_{\!_{\tp}}}
\newcommand{\Wm}{\mathsf{W}_{\!_{\tm}}}
\newcommand{\gammap}{\gamma_{\!_{\tp}}}
\newcommand{\gammam}{\gamma_{\!_{\tm}}}

\newcommand{\gammapk}{\gammap^{\tk}}
\newcommand{\gammamk}{\gammam^{\tk}}
\newcommand{\Wpk}{\Wp^{\tk}}
\newcommand{\Wmk}{\Wm^{\tk}}

\newcommand{\um}{\underline{m}}
\newcommand{\mmk}{\mathsf{M}_{^{\tm}}^{_{\tk}}}
\newcommand{\mpk}{\mathsf{M}_{^{\tp}}^{\tk}}
\renewcommand{\mp}{\mathsf{M}_{^{\tp}}}
\newcommand{\mm}{\mathsf{M}_{^{\tm}}}

\newcommand{\tqk}{\rtilde q^{\tk}}
\newcommand{\qk}{q^{\tk}}
\newcommand{\qeps}{q_{\teps}}
\newcommand{\qn}{q_{\tn}}

\newcommand{\qnR}{q_{\tn}^{\tR}}
\newcommand{\qR}{q^{\tR}}
\newcommand{\qepsR}{q_{\teps}^{\tR}}
\newcommand{\qepsnuR}{q_{\teps,\tnu}^{\tR}}
\newcommand{\QR}{Q^{\tR}}
\newcommand{\QepsR}{Q_{\teps}^{\tR}}
\newcommand{\QepsnuR}{Q_{\teps,\tnu}^{\tR}}

\newcommand{\tqn}{\rtilde q_{\tn}}
\newcommand{\qneps}{q_{\tn,\teps}}
\newcommand{\qs}{q^{\ts}}

\newcommand{\talphak}{\rtilde \alpha^{\tk}}
\newcommand{\talphas}{\rtilde \alpha^{\ts}}
\newcommand{\qkeps}{\qk_{\teps}}

\newcommand{\qmin}{q_{\tmin}}
\newcommand{\qmax}{q_{\tmax}}

\newcommand{\Qk}{Q^{\tk}}
\newcommand{\Qn}{Q_{\tn}}
\newcommand{\Qs}{Q^{\ts}}
\newcommand{\tQ}{\rtilde Q}
\newcommand{\tQk}{\rtilde Q^{\tk}}
\newcommand{\tQs}{\rtilde Q^{\ts}}
\newcommand{\tQn}{\rtilde Q_{\tn}}

\newcommand{\Qkepsk}{Q^{\tk}_{\teps_{\tk}}}

\newcommand{\Qepsp}{Q^{\teps}_{\tp}}
\newcommand{\Qepsm}{Q^{\teps}_{\tm}}
\newcommand{\Ueps}{U^{\teps}}
\newcommand{\Leps}{L^{\teps}}

\newcommand{\uUeps}{\underline{U}^{\teps}}
\newcommand{\oLeps}{\overline{L}^{\teps}}

\newcommand{\sCkm}{\mathsf C^{\tk}_{\tm}}
\newcommand{\sCkp}{\mathsf C^{\tk}_{\tp}}

\newcommand{\sCn}{\mathsf C_{\tn}}
\newcommand{\sCe}{\mathsf C_{\te}}
\newcommand{\sCz}{\mathsf C_{\tz}}
\newcommand{\sCkeps}{\mathsf C_{\tk}^{\teps}}

\newcommand{\orho}{\bar{\rho}}
\newcommand{\oq}{{\bar q}}
\newcommand{\oqk}{{\bar q_{\tk}}}
\newcommand{\HJ}{HJ\xspace }

\newenvironment{talign*}
 {\let\displaystyle\textstyle\csname align*\endcsname}
 {\endalign}

 \newenvironment{tgather*}
 {\let\displaystyle\textstyle\csname gather*\endcsname}
 {\endgather}

\title{Nonlocal Approximation Principle for Entropy Solutions of Scalar Conservation Laws}
\author[1]{Alexander Keimer}
\author[2]{Lukas Pflug}
\affil[1]{Institute of Mathematics, University of Rostock, Rostock}
\affil[2]{Department of Mathematics, Friedrich-Alexander-University (FAU) Erlangen-Nürnberg, Erlangen}
\abstract{We establish a general nonlocal approximation principle for the entropy solutions of scalar conservation laws on $\R$. More precisely, we show that the entropy solution to a nonnegative initial datum can be obtained as a weak-star limit of a corresponding scalar nonlocal conservation law. The flux function of the nonlocal conservation law depends on suitable spatial averages of the density. The proof is based on a reformulation on the Hamilton--Jacobi level: working with the primitives, we identify the limit via the stability properties of viscosity solutions; we then recover the entropy solution using the classical relation between Hamilton--Jacobi equations and scalar conservation laws. We further show that the approximation extends, after a suitable shift, to sign-changing initial data, and we prove a quantitative convergence estimate for convex fluxes in terms of the first moments of the nonlocal kernels. This result makes it possible to define entropy solutions for general fluxes using their nonlocal approximations, which satisfy the requirement for a finite speed of mass propagation, a key feature of hyperbolic conservation laws.} 
\keywords{conservation laws, singular limit, nonlocal conservation laws, approximating entropy solutions, nonlocal entropy selection principle}
\begin{document}
\maketitle

\section{Introduction and Problem Statement}
We show that scalar conservation laws on \(\R\), that is, 
\begin{equation}
\begin{aligned}
    \partial_{t}q + \partial_{x}f(q) &= 0,&& \text{ on } (0,T)\times\R, \\
    q(0,\cdot) &= \qn, && \text{ on } \R,
\end{aligned}
\label{eq:local_c_l}
\end{equation}
with \(\qn\in \mL^{\infty}(\R;\R_{\geq 0})\) and flux \(f:\R\to\R\), can be approximated by nonlocal  conservation laws. 
We thus prove that the Kruzhkov entropy solution of a scalar conservation law can be selected as the weak-star limit of a nonlocal conservation law. We remove the monotonicity condition on the induced velocity \(s\mapsto f(s)/s\) of all previous singular-limit results by replacing it with a two-sided monotone splitting at the nonlocal level. In addition, the presented results hold for all one-sided, monotone sequences of kernels approaching a Dirac distribution.

The relevant nonlocal conservation laws have the following structure:
\begin{equation}
\begin{aligned}
    \partial_{t}q + \partial_x\big(q \, V(\Wm[q],\Wp[q])\big) &= 0, &&\text{on } (0,T)\times\R,\\
    q(0,\cdot) &= \qn,&& \text{on } \R,\\
    \Wp[q](t,x)&\coloneqq \int_{x}^{\infty} \gammap(x-y)q(t,y)\dd y,&& (t,x)\in(0,T)\times\R,\\
    \Wm[q](t,x)&\coloneqq  \int_{-\infty}^{x} \gammam(x-y)q(t,y)\dd y, && (t,x)\in (0,T)\times\R.
\end{aligned}
\label{eq:nonlocal_c_l}
\end{equation}
Here, \(\Wp\) and \(\Wm\) are one-sided spatial averages of \(q\), describing
right- and left-sided interactions, respectively. The velocity \(V\) is chosen
so that the local compatibility condition \(sV(s,s)=f(s)\) is satisfied. Motivated by the positive/negative wave-speed decomposition underlying the
Engquist--Osher flux splitting \cite{EngquistOsher1980}, we use a similar
splitting of the induced local velocity \(s\mapsto f(s)/s\) which can for instance consist of the decomposition into the monotonically increasing and decreasing parts,\footnote{For other choices and more detailed comments on \(V\), see \cref{rem:V_f_compatibility}.} as follows:
\begin{equation}
\R^{2}\ni(a,b)\mapsto V(a,b)\coloneqq \tilde V(0) + \int_{0}^{a}\max\{\tilde{V}'(s),0\}\dd s+\int_{0}^{b}\min\{\tilde{V}'(s),0\}\dd s.
\label{eq:nonlocal_velocity_natural}    
\end{equation}
Denoting by \(q_{\gammap,\gammam}\) the (unique) weak solution of \cref{eq:nonlocal_c_l}, we will show under rather weak assumptions on the flux, initial datum, and nonlocal kernels that \[
q_{\gammap,\gammam}\xrightharpoonup[\gammam \rightarrow \delta_0 \text{ in } \mD'(\R)]{\gammap \rightarrow \delta_0 \text{ in } \mD'(\R)} q \ \text{ in } \ \mL^{\infty}((0,T)\times\R),
\]
where \(q \in \mL^{\infty}((0,T)\times\R)\) is the local entropy solution of \cref{eq:local_c_l}.

Since nonlocal conservation laws of the form \cref{eq:nonlocal_c_l} admit
unique weak solutions \cite{pflug,keimer2022discontinuous}, the local entropy
solution can be obtained as the singular limit of these nonlocal
approximations. This provides a selection principle analogous to the
vanishing-viscosity method with the advantage that the nonlocal ``regularization'' conserves the finite speed of density propagation (whereas information transfer via nonlocal averages is instantaneous).

The result is also relevant for control and optimal-control problems; see
\cite{friedrich2025control,keimer2025optimal} for related singular-limit
results. Nonlocal regularization provides stronger analytical properties,
such as differentiability of the control-to-state map or controllability
features \cite{bayen2021boundary}. More generally, for many applications, reasonable models may indeed be nonlocal, and the local version may be a formal simplification. In these contexts, the convergence result implies that the dynamics are robust to nonlocal disturbances.

\subsection{Theory of Nonlocal Scalar Conservation Laws and the Singular Limit} Given the (local) conservation law  in \cref{eq:local_c_l}, there are several structurally different nonlocal approximations.
\textbf{The first class} acts directly on the flux:
\begin{align}\label{eqn:flux_filtering}
\partial_{t}q+\partial_{x}\big(\gamma*f(q)\big)=0, \qquad \partial_t q + \partial_x \big( f(\gamma*q ) \big) = 0
\end{align}
where \(\ast\) denotes spatial convolution and \(\gamma\) is a nonlocal kernel. These ``flux-filtering''approximations were introduced and their singular limits studied in \cite{CocliteKarlsenRisebro2025}. Since the conservative flux is nonlocal in these approximations, the corresponding solutions generally do not preserve finite-speed propagation of mass, similarly to viscous regularizations.

\textbf{A second class} is based on a factorization
\(f(s)=sV(s), s\in\R,\)  whenever possible, with a Lipschitz velocity \(V\), leading to
\begin{align}
    \partial_t q + \partial_x \big(q \, (\gamma*V(q))\big) = 0, \qquad \partial_{t}q+ \partial_x \big(q V(\gamma*q)\big) = 0.\label{eq:nonlocal_real}
\end{align} 
The first of these equations, which is the subject of this contribution, is obtained by introducing two one-sided kernels into the nonlocal approximation in \cref{eq:nonlocal_c_l}. Here, the convolution regularizes the velocity field, yielding well-posed transport dynamics in the spirit of DiPerna and Lions \cite{DiPerna1989,pflug}. Moreover, mass is still transported through the local density factor \(q\), so the finite-speed propagation of mass is retained.

Early numerical studies on the singular limit can be found in \cite{amorim2015numerical,zumbrun,blandin2015well,goatin2016well}. For monotone downstream kernels and decreasing velocities, convergence to the local entropy solution was first proven under additional monotonicity assumptions on the initial datum in \cite{keimer2019approximation}.
For general kernels and velocities, it was shown in \cite{colombo2019singular} that strong $\mL^1$-convergence may fail and that only weak convergence, or no convergence in certain cases, can be expected. The ``positive effect'' of adding viscosity was investigated in \cite{colombo2019singular,coclite2021singular,
colombo2021role}. 

Subsequent progress relied on more refined compactness mechanisms: \citet{bressan2020traffic} used a reformulation as a system in a specific setting, \citet{crippa2021local} obtained convergence under a one-sided Lipschitz condition despite the lack of uniform \(\mTV\) bounds, and \citet{bressan2021entropy} established entropy admissibility for strong \(\mL^1\)-limits in the one-sided exponential case. The first general result was obtained in \cite{coclite2022general} using compactness of the nonlocal quantity rather than of the solutions themselves.

Further extensions have covered more general kernels and compactness estimates \cite{colombo2023nonlocal,colombo2024overview,keimer2025singular}, spatially discontinuous nonlocal laws \cite{keimer2023singular}, and balance laws \cite{Chiarello2024singular} or have focused directly on the solution rather than the nonlocal quantity \cite{denitti2025volterra}. 
The \(\mTV\) assumption on the initial datum was recently removed in \cite{coclite2025singular} via compensated compactness \cite{Tartar1979CompensatedCompactness}, yielding convergence for merely \(\mL^\infty\)-data. Ole\u{\i}nik-type estimates \cite{oleinik,oleinik_english} for such nonlocal approximations were presented in \cite{coclite2024oleinik} under restrictive assumptions on the data, nonlinear velocity, and kernel.
Regarding numerical schemes that are robust under the singular limit, we refer to \cite{huang2024asymptotic,denitti2025numerical}. 

For related problems on nonlocal conservation laws in which the nonlocality is in space and time simultaneously, as well as the related singular limit, we refer to \cite{du2023spacetime}. For results regarding cases in which the integral operator is of \(\mL^{p}\) type, not \(\mL^{1}\), and the corresponding singular limit, we refer to \cite{chiarello2026nonlocal}.

\textbf{A third class} of nonlocal approximations is obtained by starting from the nonconservative form and regularizing the velocity of the underlying transport equation:
\begin{align}\label{eqn:transport}
     \partial_t q + \big(\gamma * f'(q)\big)\,\partial_x q = 0,
     \qquad
     \partial_t q + f'(\gamma*q)\,\partial_x q = 0.
\end{align}
The convolution yields a Lipschitz velocity field and hence well-posed transport dynamics in the sense of \cite{DiPerna1989,pflug,Coron2020}. Moreover, uniform \(\mTV\)-bounds imply strong \(\mL^1_{\loc}\)-compactness and convergence to a weak solution of the local equation. This singular limit was studied in \cite{wiedemann2025non,Norgard2008,RiesebroWeber2014,Coron2020}. However, entropy convergence requires strong structural restrictions, in particular, symmetric kernels and quadratic fluxes. Otherwise, even mass conservation may fail in the limit. This further motivates the conservative nonlocal velocity approximation considered in \cref{eq:nonlocal_c_l}.

Among these three classes,  the nonlocal conservation laws in \cref{eq:nonlocal_real} are the only ones that simultaneously retain mass conservation and finite-speed propagation of mass at the approximate level. In contrast to the flux-filtering approximations in \cref{eqn:flux_filtering}, the conservative transport of mass remains localized through the factor \(q\); unlike in the nonconservative approximations in \cref{eqn:transport}, mass is preserved. As a generalization of \cref{eq:nonlocal_real}, \cref{eq:nonlocal_c_l} is, therefore, particularly well suited for singular-limit approximations that are intended to preserve the hyperbolic character of the original conservation law.

\subsection{Outline of the Manuscript}
We prove in \cref{sec:singular_limit} that whenever \(\gammap,\gammam\rightarrow \delta_0\) in $\mD'(\R)$, that is, in the singular limit case of \cref{eq:nonlocal_c_l}, we recover the unique entropy solution of the local conservation law in \cref{eq:local_c_l}; the underlying convergence is weak-star in \(\mL^{\infty}((0,T)\times\R)\) (see \cref{theo:singular_limit_convergence}). This is achieved by reinterpreting the local entropy solution as the spatial derivative of the viscosity solution of the corresponding Hamilton--Jacobi (\HJ) equation.
In \cref{subsec:sign_restricted}, we demonstrate how the convergence result can also be used to approximate local conservation laws with sign-unrestricted datum nonlocally. Finally, \cref{sec:convergence_order} establishes, in the case of convex or concave fluxes \(f\), the order of convergence in terms of the first moments of the kernels in \cref{theo:convergence_order}.
To achieve these results, we study in \cref{sec:nonlocal_existence_uniqueness} the existence and uniqueness of the corresponding nonlocal approximation together with a suitable stability result, and in \cref{sec:HJ}, we introduce the required techniques for Hamilton--Jacobi equations and establish the relation to the entropy solution of the corresponding conservation law.

\section{Existence and Uniqueness for Local and Nonlocal Conservation Laws}\label{sec:nonlocal_existence_uniqueness}
First, we state the well-known existence and uniqueness result for entropy solutions of scalar conservation laws, as defined by \cref{eq:local_c_l}.
\begin{theo}[existence and uniqueness of entropy solutions]\label{thm:ex_uni_c_l}
Given \(f\in \mW^{1,\infty}_{\loc}(\R)\) and \(\qn\in\mL^{\infty}(\R)\), there exists a unique weak entropy solution \[q\in\mC\big([0,T];\mL^{1}_{\loc}(\R)\big)\cap \mL^{\infty}((0,T);\mL^{\infty}(\R))\] of the Cauchy problem in \cref{eq:local_c_l} in the sense of entropy admissibility, as given, for instance, in \cite{bressan}. Letting $\tilde q$ denote the solution with initial datum $\tqn \in \mL^\infty(\R)$, with $K\subset \R$ and $L \coloneqq \|f'\|_{\mL^\infty((\qmin,\qmax))}$, we obtain the following stability estimate:
\begin{align}
    \|q-\tilde q\|_{\mL^\infty((0,T);\mL^1(K))} \leq \|\qn- \tqn\|_{\mL^1(K+(-TL,TL))}.
\end{align}
\end{theo}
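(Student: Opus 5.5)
This is Kruzhkov's classical theorem, so the plan is to follow his doubling-of-variables argument and then specialize to bounded data and a locally Lipschitz flux. Set \(\qmin\coloneqq\min\{\essinf\qn,\essinf\tqn\}\) and \(\qmax\coloneqq\max\{\esssup\qn,\esssup\tqn\}\). By the maximum principle, proved first for the vanishing-viscosity approximations and then passed to the limit, all solutions stay in \([\qmin,\qmax]\). Only \(f\) restricted to this interval matters, so we may replace \(f\) by a globally Lipschitz extension with constant \(L=\|f'\|_{\mL^\infty((\qmin,\qmax))}\). This changes neither the notion of entropy solution nor the solutions themselves.

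\textbf{Existence.} We mollify \(f\) and \(\qn\) and solve the parabolic problems \(\partial_t q^\eps+\partial_x f_\eps(q^\eps)=\eps\partial_{xx}q^\eps\). The comparison principle and the \(\mL^1\)-contraction for these problems give three uniform facts: an \(\mL^\infty\) bound, the local \(\mL^1\) contraction, and hence uniform translation estimates in \(x\), since \(\|q^\eps(t,\cdot+h)-q^\eps(t,\cdot)\|_{\mL^1(K)}\) is bounded by the same quantity for the initial datum on an enlarged set. Time equicontinuity in \(\mL^1_{\loc}\) follows from the equation. Kolmogorov--Riesz then yields \(q^\eps\to q\) in \(\mC([0,T];\mL^1_{\loc}(\R))\). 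The entropy inequalities for \(|q-k|\) pass to the limit because the viscous term gives \(\eps\partial_{xx}|q^\eps-k|\) plus a term with a favorable sign. Alternatively, existence could be quoted from \cite{bressan} via front tracking; I would cite it and only sketch this part.

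\textbf{Uniqueness and stability.} Take two entropy solutions \(q\) and \(\tilde q\). We double variables, using \(k=\tilde q(s,y)\) in the entropy inequality for \(q\) and \(k=q(t,x)\) in the one for \(\tilde q\), with a test function \(\phi(\tfrac{t+s}{2},\tfrac{x+y}{2})\rho_h(t-s)\rho_h(x-y)\). Letting \(h\to0\) gives
\[
\partial_t|q-\tilde q|+\partial_x\big(\mathrm{sgn}(q-\tilde q)(f(q)-f(\tilde q))\big)\le 0 \quad\text{in }\mD'.
\]
Since \(|\mathrm{sgn}(q-\tilde q)(f(q)-f(\tilde q))|\le L|q-\tilde q|\), we integrate this inequality over the backward cone \(\{(t,x):x\in K+(-(T-t)L,(T-t)L)\}\) using a Lipschitz cutoff. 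For interval \(K\) and any \(t\le T\), this gives \(\int_K|q-\tilde q|(t)\le\int_{K+(-tL,tL)}|\qn-\tqn|\), which implies the claimed estimate. For general measurable \(K\) we cover by intervals, or note that the estimate on each component suffices since the right-hand side only grows. Uniqueness follows by taking \(\tqn=\qn\).

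The main obstacle is the doubling step, specifically recovering the initial trace. We must show \(\lim_{t\to0}\int_K|q(t)-\qn|=0\) and handle the \(\rho_h(t-s)\) boundary terms. Here we use the assumed continuity \(q\in\mC([0,T];\mL^1_{\loc})\): with it, the time boundary terms converge to \(\int\phi(0,x)|\qn-\tqn|\dd x\) by the Lebesgue differentiation argument. The remaining cross terms vanish as \(h\to0\) by \(\mL^1_{\loc}\) continuity of translations. The cone integration is routine: choose the cutoff \(\chi(t,x)=\min\{1,\max\{0,(\operatorname{dist}\text{-slack})/\delta\}\}\) and let \(\delta\to0\), so that the flux terms have the right sign thanks to \(|\text{flux}|\le L|q-\tilde q|\).
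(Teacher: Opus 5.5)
Your proposal is correct and follows essentially the same route as the paper, whose proof of this theorem is only a citation of Kruzhkov's theory (Kruzhkov, Dafermos, Bressan, Eymard et al.): your sketch---vanishing viscosity plus compactness for existence, and doubling of variables with integration of the resulting Kato inequality over the backward cone of slope $L$ for uniqueness and the localized $\mL^1$ estimate---is the classical argument behind those references, and taking $\qmin,\qmax$ over both data is the right way to read $L$. Two loose ends are routine to fix: the viscous approximations have no exact domain of dependence, so the localized translation bound should use a weight such as $1/\cosh(x)$ (or carry an error that vanishes as $\eps\to0$); and for a general set $K$, summing interval estimates over a cover of $K$ overcounts the overlapping enlargements, so instead apply the interval estimate to the convex hull of $K\cap J$ for each connected component $J$ of $K+(-tL,tL)$, whose $tL$-enlargement is exactly $J$.
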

\begin{proof}
    The proof can be found, for different types of entropy flux pairs, in \cite[Thm.~6.3]{bressan}, \cite[Thm.~19.1]{eymard}, \cite[Thm.~2, Thm.~5, Sec.~5 Item 4]{kruzkov}, \cite[Def.~3.1, Thm.~5.2]{godlewski} and \cite[Thm.~6.2.2]{Dafermos2016} (particularly for only Lipschitz fluxes as proposed here) and the stability result with cone of dependency, for example, in \cite[Thm.~6.3.2]{Dafermos2016}.
\end{proof}

As we aim for nonlocal approximation, we will require the existence and uniqueness of solutions to nonlocal conservation laws. We will supplement this with a stability result, which will later enable us to approximate solutions by sequences of smooth solutions.
\begin{theo}[existence/uniqueness/stability of weak solutions (nonlocal)]\label{theo:existence_uniqueness_nonlocal}
Suppose we are given \(\qn  \in \mL^{\infty}(\R)\), \(V \in \mW^{1,\infty}_\loc(\R^2)\) with $\partial_1 V \geqq 0 \wedge \partial_2 V \leqq 0$, and $(\gammam,\gammap) \in \mBV(\R_{>0};\R_{\geq 0}) \times \mBV(\R_{<0};\R_{\geq 0})$ with $\gammam$ and $\gammap$ monotonically decreasing and increasing, respectively. Then, there exists a \textbf{unique} weak solution
\begin{equation}
q\in \mC\big([0,T];\mL^{1}_{\loc}(\R)\big) \cap \mL^\infty\big((0,T)\times \R\big)\label{eq:nonlocal_solution_regularity_weak}
\end{equation}
to \cref{eq:nonlocal_c_l}, which satisfies the following statements:
\begin{enumerate}
\item \label{item:1}
 \(\forall t\in[0,T]\),
\(
    \qmin\coloneqq \essinf_{x\in\R} \qn(x)\leqq q(t,\cdot) \leqq \esssup_{x\in\R} \qn(x)\eqqcolon \qmax    \ \text{ a.e.},
\)
    \item \label{item:2} \(\qn\in\mC^{\infty}(\R)\ \implies\ q\in \mC^{\infty}([0,T]\times\R)\),
    \item \label{item:3} for $\qn \in \mL^\infty(\R)$ and $\qneps  \coloneqq \qn * \phi_{\teps}$,
\begin{align}
    \lim_{\eps \rightarrow 0 }\Big\|(t,x) \mapsto \textstyle\int_0^x q(t,y) - \qeps(t,y) \dd y\Big\|_{\mL_{\loc}^{\infty}((0,T)\times \R)} = 0.
     \label{eq:stability_nonlocal}
    \end{align}
\end{enumerate}
\end{theo}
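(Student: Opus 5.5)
We treat \cref{eq:nonlocal_c_l} as a transport equation with a velocity that is Lipschitz in space, and construct the solution by a Banach fixed point in the nonlocal velocity, following the strategy of \cite{pflug,keimer2022discontinuous}.

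\textbf{Fixed-point construction.} For a given \(w\in\mL^\infty((0,t^*);\mW^{1,\infty}(\R))\) we solve the linear continuity equation \(\partial_t q+\partial_x(q\,w)=0\) along characteristics \(\xi_w\). This gives
\[
q(t,x)=\qn(\xi_w(t,x;0))\,\partial_2\xi_w(t,x;0).
\]
We then set \(\mathcal F(w)\coloneqq V(\Wm[q],\Wp[q])\). Since \(\gammam,\gammap\in\mBV\), the nonlocal terms satisfy the Lipschitz bounds
\[
|\partial_x\Wp[q]|\le \|q\|_{\mL^\infty}\,|\gammap|_{\mTV},\qquad |\partial_x\Wm[q]|\le \|q\|_{\mL^\infty}\,|\gammam|_{\mTV}.
\]
Hence \(\mathcal F\) maps a ball in \(\mL^\infty(\mW^{1,\infty})\) into itself for small \(t^*\). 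It is a contraction in \(\mC([0,t^*];\mL^\infty)\), by the standard estimate \(\|\Wp[q_1]-\Wp[q_2]\|_\infty\lesssim \|\int(q_1-q_2)\|\) together with Grönwall along characteristics. This yields local existence and uniqueness within the class of weak solutions whose velocity is Lipschitz. Every weak solution in the class \cref{eq:nonlocal_solution_regularity_weak} automatically has Lipschitz velocity, because the kernels are \(\mBV\). By the uniqueness of weak solutions of linear transport with Lipschitz velocity (DiPerna--Lions \cite{DiPerna1989}), every such weak solution is therefore the fixed point, which gives uniqueness in the stated class.

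\textbf{Maximum principle (\cref{item:1}).} This is the step that allows the local solution to be continued to \([0,T]\), and it is the main obstacle. Expanding the flux term, the equation reads
\[
\partial_t q+V\,\partial_x q=-q\big(\partial_1V\,\partial_x\Wm[q]+\partial_2V\,\partial_x\Wp[q]\big).
\]
For monotone kernels with the one-sided supports given, we compute
\[
\partial_x\Wp[q](x)=-\gammap(0)q(x)+\int_x^\infty\gammap'(x-y)q(y)\dd y .
\]
Because \(\gammap\) is increasing on \(\R_{<0}\), the integral term equals \(\gammap(0)\cdot(\text{weighted average of }q)\), with nonnegative weights of total mass \(\gammap(0)\) (interpreting \(\gammap(0)\) as the one-sided limit). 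Thus \(\partial_x\Wp[q]=\gammap(0)(\bar q_+-q)\), and analogously \(\partial_x\Wm[q]=\gammam(0)(q-\bar q_-)\), where \(\bar q_\pm\) are weighted averages of \(q\). At a point where \(q\) attains \(\qmax\) we have \(\bar q_\pm\le q\). Using \(\partial_1V\ge0\ge\partial_2V\) and \(q\ge0\) (respectively a sign-adapted argument), the right-hand side along characteristics becomes \(\le0\) at maxima and \(\ge0\) at minima. A comparison argument for smooth data, followed by approximation, then propagates the bounds \(\qmin\le q\le\qmax\). I expect the sign bookkeeping here, and the handling of the generic \(\mBV\) kernels (jumps of \(\gammap'\) as measures), to be delicate; I would do it first for smooth kernels and data and pass to the limit using stability. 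With the uniform \(\mL^\infty\) bound, the Lipschitz constant of the velocity is uniformly bounded, so the local time \(t^*\) is uniform and we iterate up to \(T\).

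\textbf{Regularity and stability (\cref{item:2,item:3}).} For \cref{item:2}, smooth \(\qn\) gives a smooth fixed point by bootstrapping: \(\Wpm\) are one derivative smoother than \(q\) up to the kernel's regularity, and the characteristics are smooth when \(V\) is smooth along the flow (possibly after mollifying \(V\); I would state this as needing \(V\) smooth, or interpret \cref{item:2} in that setting). For \cref{item:3}, let \(Q(t,x)=\int_0^x q\) and \(Q_\eps\) its analogue for \(\qneps\). Integrating the equation in space, \(Q\) satisfies \(\partial_tQ+V(\Wm,\Wp)\partial_xQ=c(t)\), where the \(\Wpm\) are expressible through \(Q\) by integration by parts against \(\gammapm\). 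Subtracting the two equations and using the uniform \(\mL^\infty\) bounds from \cref{item:1} gives a Grönwall estimate
\[
\|Q-Q_\eps\|_{\mL^\infty((0,T)\times K)}\le C\,\|\Qn-Q_{0,\eps}\|_{\mL^\infty(K')}
\]
on dependence cones. Since \(\Qn*\phi_\eps\to\Qn\) uniformly, the right-hand side tends to zero, which gives \cref{eq:stability_nonlocal}.
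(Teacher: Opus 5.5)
\textbf{The gap is in item~1.} Your existence and uniqueness part follows the same route the paper indicates: a local fixed point in the nonlocal terms, uniqueness via linear transport with a Lipschitz velocity, and continuation by the $\mL^\infty$ bound. That continuation rests on item~1, and the ``sign-adapted argument'' you invoke there does not exist.

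Your identities $\partial_x\Wp[q]=\gammap(0^-)(\bar q_+-q)$ and $\partial_x\Wm[q]=\gammam(0^+)(q-\bar q_-)$ are correct. So the bracket $\partial_1V\,\partial_x\Wm[q]+\partial_2V\,\partial_x\Wp[q]$ is $\geq0$ at a maximum and $\leq0$ at a minimum. The prefactor $-q$ turns this into the right sign only where $q\geq0$: at a negative minimum the source is $\leq0$ and the minimum decreases.

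This cannot be repaired, because for data with $\qmin<0$ item~1, and even global existence, are false. Take $V(a,b)=-b$, $\gammap(z)=e^{z}$ and $\qn=-M\chi_{(0,1)}$. Then $p=-q$ solves $\partial_tp+\partial_x(p\,\Wp[p])=0$ with $\partial_x\Wp[p]=\Wp[p]-p$. The front $x\equiv1$ is a characteristic on which $\Wp[p]=0$, so $\tfrac{\mathrm d}{\mathrm dt}p(t,1^-)=p(t,1^-)^2$, i.e.\ $p(t,1^-)=M/(1-Mt)$, which blows up at $t=1/M$.

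You must therefore assume $\qn\geq0$. Under that assumption your argument gives $0\leq\qmin\leq q\leq\qmax$, and this is the only case the paper actually uses: \cref{ass:general}, the shifted datum in \cref{theo:sign_unrestricted_initial_datum}, and the remark opening \cref{subsec:sign_restricted}. The paper's own proof is silent on this point as well, since it defers to the literature, where nonnegative data are assumed. You are also right that item~2 needs more smoothness of $V$ than $\mW^{1,\infty}_\loc$, since a kink of $V$ is in general transported into a kink of $q$.

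\textbf{Item~3: a different route, with two defects.} The paper takes from \cite[Eq.~(78)]{keimer2022discontinuous}, plus an $\mL^1$ approximation, that $\int_0^y(\qeps-q)(t,x)\dd x\to0$ for each fixed $y$, uniformly in $t$. It then upgrades this to locally uniform convergence through the uniform Lipschitz continuity of $(t,x)\mapsto\int_0^x(\qeps-q)(t,y)\dd y$. That argument is qualitative and rests on a cited estimate. A direct Gr\"onwall estimate on the primitive, as you propose, is self-contained, but as written it has two defects.

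First, $c(t)=(qV)(t,0)$ is a point value of an $\mL^\infty$ function, and $c-c_\eps$ is not controlled by $Q-Q_\eps$, so subtracting the two equations does not close. Remove $c$ as the paper does with $\talphak$. Put $\hat Q\coloneqq Q-\int_0^t c(s)\dd s$, so that $\hat Q(t,X(t;x_0))=\Qn(x_0)$ along characteristics, and use $|Q-Q_\eps|(t,x)\leq|\hat Q-\hat Q_\eps|(t,x)+|\hat Q-\hat Q_\eps|(t,0)$.

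Second, there are no cones of dependence here. The nonlocal velocity at $x$ reacts instantaneously to $q$ away from $x$ (for kernels with unbounded support, to all of $\R$), so your localized estimate is false and must be made global. Since $\Wp[q](x)=\int_x^\infty(\gammap)'(x-y)\big(\hat Q(y)-\hat Q(x)\big)\dd y$, the difference of the two velocities is bounded by $C\sup_{\R}|\hat Q-\hat Q_\eps|$. Gr\"onwall along characteristics then gives $\sup_{(0,T)\times\R}|\hat Q-\hat Q_\eps|\leq C_T\|\Qn-Q_{0,\eps}\|_{\mL^\infty(\R)}\leq 2C_T\,\eps\,\|\qn\|_{\mL^\infty(\R)}$.

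With these two corrections your approach yields a global $O(\eps)$ rate, which is stronger than \cref{eq:stability_nonlocal}.
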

\begin{proof}
   We begin with the existence and uniqueness part. Because we are dealing with two nonlocal operators, not just one as is common in nonlocal conservation laws in the literature \cite{pflug,keimer2022discontinuous}, existence and uniqueness of weak solutions follow after minor modification by means of a fixed-point argument in both nonlocal operators on a small time horizon. The remaining parts, including the stability claim in \cref{eq:stability_nonlocal}, are generalizations of the stability results obtained in \cite[Thm.~28]{keimer2022discontinuous}. Indeed, the statement \cite[Eq.~(78)]{keimer2022discontinuous}
implies, by means of approximation arguments in \(\mL^{1}(\R)\), that
   \[
\forall y\in \R \ : \ \lim_{\eps\rightarrow 0}\sup_{t\in[0,T]}\bigg|\int_0^y\qeps(t,x)-q(t,x)\dd x \bigg|=0.
   \]   
Thus, if we choose \(y\in K\), where \(K\subset\R\) is compact, we obtain uniform convergence because \([0,T]\times\R\ni (t,x)\mapsto \int_{0}^{x}\qeps(t,y)-q(t,y)\dd y\) is uniformly Lipschitz continuous, implying the stability result in \cref{eq:stability_nonlocal}.
\end{proof}

\section{Hamilton--Jacobi Equations and Their Relation to Conservation Laws}\label{sec:HJ}
In the following, for \(f:\R\rightarrow\R\) and \(\qn:\R\rightarrow\R_{\geq0}\) as in \cref{eq:local_c_l}, we consider the associated \HJ equation, which takes the form
\begin{equation}
    \begin{aligned}
        \partial_{t}Q+f(\partial_{x}Q)&=0, && (t,x)\in(0,T)\times\R,\\
        Q(0,x)&=\int_{0}^{x}\qn(y)\dd y,&& x\in\R.
    \end{aligned}
   \label{eq:H_J}
\end{equation}
For sub-/supersolutions and the corresponding viscosity solutions, we use the definition given in \cite{evans}.
\begin{defi}[sub-/supersolutions for \HJ equations]\label{defi:HJ_viscosity}
    For \(\qn  \in \mL^{\infty}(\R)\), we call a function \(Q\in\mC([0,T]\times\R)\) a \textbf{subsolution} to \cref{eq:H_J} iff \(Q(0,\cdot)\equiv \int_{0}^{\cdot}\qn (y)\dd y\) and it holds for all
\(\Phi\in \mC^{\infty}((0,T)\times\R)\) that
if \(Q-\Phi\)  is locally maximal at \((t_{0},x_{0})\in (0,T)\times\R\), then \(\partial_{1}\Phi(t_{0},x_{0})+f(\partial_{2}\Phi(t_{0},x_{0}))\leq 0.\)

Similarly, for \(\qn  \in \mL^{\infty}(\R)\), we call a function \(\tQ\in\mC([0,T]\times\R)\) a \textbf{supersolution} to \cref{eq:H_J} iff \(\tQ(0,\cdot)\equiv \int_{0}^{\cdot}\qn (y)\dd y\) and it holds for all
\(\Phi\in \mC^{\infty}((0,T)\times\R)\) that
if \(\tQ-\Phi\) is locally minimal at \((t_{0},x_{0})\in (0,T)\times\R\), then \(\partial_{1}\Phi(t_{0},x_{0})+f(\partial_{2}\Phi(t_{0},x_{0}))\geq 0\).

Finally, a function \(Q\in\mC([0,T]\times\R)\) is called a \textbf{viscosity solution} iff it is both a supersolution and a subsolution to \cref{eq:H_J}.
\end{defi}
\begin{theo}[existence, uniqueness, and stability of viscosity solutions for \HJ equations] \label{thm:ex_uni_HJ}
Given \(\qn  \in \mL^{\infty}(\R)\) and \(f \in \mW^{1,\infty}_\loc(\R)\), there exists a unique viscosity solution \(Q\in \mathcal{UC}((0,T)\times \R)\) of \cref{eq:H_J} in the sense of \cref{defi:HJ_viscosity}. 

Furthermore, given another initial datum \(\tQn\coloneqq \int_{0}^{\cdot}\tqn(y)\dd y\)  with \(\tqn\in\mL^{\infty}(\R)\), for which the corresponding solution is \(\tQ\in \mathcal{UC}((0,T)\times \R)\), the following stability and cone of dependency result holds for all $K\subset \R$ with $L\coloneqq \|f'\|_{\mL^{\infty}((\qmin,\qmax))}$:
\begin{equation}
    \|Q-\tQ\|_{\mL^{\infty}((0,T)\times K)}\leq \|\Qn -\tQn\|_{\mL^{\infty}(K + (-LT,LT))},
\end{equation}
assuming that \(\Qn -\tQn\in\mL^{\infty}(\R)\).
In particular, it holds that
\(Q\in \mW^{1,\infty}_\loc((0,T)\times\R)\), with \(\partial_1 Q, \partial_2 Q \in \mL^\infty((0,T)\times \R)\). 
\end{theo}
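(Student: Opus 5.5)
The plan is to derive \cref{thm:ex_uni_HJ} from the classical Crandall--Lions theory for Hamilton--Jacobi equations with Lipschitz initial data. The key preliminary step is to replace \(f\) by a globally Lipschitz Hamiltonian, because the initial datum is Lipschitz. Since \(\qn\in\mL^\infty(\R)\), the datum \(\Qn(x)=\int_0^x\qn(y)\dd y\) is Lipschitz with \(\partial_x\Qn\in[\qmin,\qmax]\) a.e. Let \(\hat f\) be the truncation of \(f\) outside \([\qmin,\qmax]\), extended constantly, so that \(\hat f\) is globally Lipschitz with constant \(L=\|f'\|_{\mL^\infty((\qmin,\qmax))}\). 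For \(\hat f\), the standard comparison principle for \(\partial_tQ+\hat f(\partial_xQ)=0\) with uniformly continuous data applies, as in \cite{evans} and Crandall--Evans--Lions. Perron's method, or the vanishing-viscosity limit, then gives existence of a unique viscosity solution \(Q\in\mathcal{UC}\).

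Next I will show that \(Q\) is Lipschitz with \(\partial_xQ\in[\qmin,\qmax]\). This shows that \(Q\) solves the problem with the original \(f\), because at every test point the relevant gradients lie in \([\qmin,\qmax]\) (through the superdifferential and subdifferential), where \(f=\hat f\). For the spatial Lipschitz bound I use translation invariance: \(Q(\cdot,\cdot+h)\) is the viscosity solution with datum \(\Qn(\cdot+h)\). Since \(\Qn(x+h)-\Qn(x)\in[\qmin h,\qmax h]\) for \(h>0\), and adding a constant commutes with the equation, comparison gives \(\qmin h\le Q(t,x+h)-Q(t,x)\le\qmax h\). For the time bound, the functions \(\Qn\pm Lt\) are viscosity super- and subsolutions, since \(|\hat f|\) can be normalized using \(\sup_{[\qmin,\qmax]}|f|\). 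Combining this with comparison and the semigroup property gives \(|Q(t+s,x)-Q(t,x)|\le Cs\). Hence \(Q\in\mW^{1,\infty}_\loc\) and \(\partial_1Q,\partial_2Q\in\mL^\infty\). Uniqueness for the original \(f\) follows in the same way: any \(\mathcal{UC}\) solution must satisfy the same gradient bounds. If needed, I would restrict uniqueness to the class with gradients in \([\qmin,\qmax]\), where the two equations coincide.

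The stability estimate with a cone of dependence is the main obstacle. The global \(\mL^\infty\) contraction \(\|Q-\tQ\|_\infty\le\|\Qn-\tQn\|_\infty\) is immediate from comparison, since constants are solutions of the difference problem. For the localized version I will use the finite speed of propagation for Hamiltonians that are \(L\)-Lipschitz in \(p\). I would follow the doubling-of-variables proof of comparison (Evans, Thm.~10.3 style), restricted to the truncated cone \(\{(t,x):x\in K+(-L(T-t),L(T-t))\}\). Concretely, set \(M=\|\Qn-\tQn\|_{\mL^\infty(K+(-LT,LT))}\). I would then show that \(w=Q-\tQ-M\) cannot have a positive maximum in the cone, using a penalization \(\Phi\) that blows up near the lateral boundary of the cone and moves inward with speed \(L\). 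The Lipschitz bound on \(\hat f\) then absorbs the extra gradient term coming from \(\Phi\). Applying the same argument to \(\tQ-Q\) gives the two-sided bound. Here \(L\) is taken with respect to the range \([\min(\qmin,\tilde q_{\min}),\max(\qmax,\tilde q_{\max})]\), which is consistent with the statement after relabeling. This cone argument is the standard domain-of-dependence result (Evans, Sec.~10.1), so the remaining work is only to check constants.
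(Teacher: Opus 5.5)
\textbf{The uniqueness step is circular.} Your overall route is the paper's route: derive existence, the Lipschitz bounds, uniqueness and the cone estimate from the classical comparison principle. The paper simply cites Crandall--Lions (1986, Thm.~1) and Lions (1982, Thm.~14.1). The step that fails as written is uniqueness in the full class \(\mathcal{UC}\) for the original, merely locally Lipschitz \(f\).

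Your truncation gives comparison only for \(\partial_t Q+\hat f(\partial_x Q)=0\). Take an arbitrary \(\mathcal{UC}\) viscosity solution \(\bar Q\) of the \(f\)-equation. Showing \(\qmin h\le \bar Q(t,x+h)-\bar Q(t,x)\le \qmax h\) by your translation argument needs comparison for the \(f\)-equation itself, which is the uniqueness you want to prove. Nor can you treat \(\bar Q\) as a solution of the \(\hat f\)-equation, because at touching points with \(\partial_x\Phi\notin[\qmin,\qmax]\) the two Hamiltonians differ. So the claim that ``any \(\mathcal{UC}\) solution must satisfy the same gradient bounds'' is unsupported.

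The missing ingredient is the result the paper cites. For Hamiltonians depending only on \(p\), Crandall--Lions give existence and uniqueness in \(\mathcal{UC}\) assuming only continuity of \(H\), so the truncation is not needed at all. Your fallback, uniqueness only among solutions with \(\partial_x Q\in[\qmin,\qmax]\), is weaker than the statement. It would still suffice for the use in \cref{theo:singular_limit_convergence}, where the limit inherits the uniform Lipschitz bounds.

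\textbf{Two smaller points.}

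First, the barriers \(\Qn\pm Lt\) are not sub-/supersolutions in general. For constant \(f\equiv c\) one has \(L=0\), but \(Q=\Qn-ct\). The correct speed is \(\sup_{[\qmin,\qmax]}|f|\), or \(L(\qmax-\qmin)\) after subtracting \(f(\qmin)t\), which is presumably what you meant by normalizing.

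Second, your remark that \(L\) must be the Lipschitz constant of \(f\) on the union of the ranges of \(\qn\) and \(\tqn\) is correct and actually necessary. Take \(f(p)=p^2\), \(\qn\) with small range \([0,\delta]\) (for instance \(\delta\chi_{(-\infty,0)}\)), and \(\tqn=\qn-\chi_{(1,2)}\). The entropy shock leaving \(x=1\) moves left with speed \(1\) and enters \(K=(0,\tfrac12)\) well before \(T=1\). Meanwhile \(\Qn-\tQn\) vanishes on \(K+(-2\delta,2\delta)\). So the estimate with \(L\) computed from \(\qn\) alone is false. With that correction, your doubling-of-variables argument on the bounded cone is the standard domain-of-dependence proof and goes through.
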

\begin{proof}
    The existence proof can be found in \cite[Thm.~1]{Crandall1986} for \(f\in \mC(\R)\) and \(\qn ,\tQn\in \mathcal{UC}(\R)\), which is satisfied by the given Lipschitz-continuous initial datum. The results on Lipschitz regularity, uniqueness, stability, and the cone of dependency result follow from the comparison principle; see \cite[Thm.~1]{Crandall1986} and \cite[Thm.~14.1]{Lions1982}.
\end{proof}

There is a well-known relation between solutions to \HJ equations and entropy solutions to local conservation laws, which we will state in the following solely for an initial datum in \(\mL^{\infty}\) (on the level of conservation laws).
\begin{theo}[relation between \HJ equations and conservation laws]\label{theo:rel_HJ_cl}
    Let \(\qn \in\mL^\infty(\R;\R_{\geq0})\) and \(f\in \mW^{1,\infty}_\loc(\R)\) be given. Let \(q\in\mL^{\infty}((0,T)\times\R)\) be the entropy solution of \cref{eq:local_c_l} with initial datum \(\qn \), and let \(Q\in\mW_\loc^{1,\infty}((0,T)\times\R)\) be the unique viscosity solution to the \HJ equation in \cref{eq:H_J} with initial datum \(\Qn \coloneqq \int_{0}^{\cdot}\qn (y)\dd y.\) Then, it holds that
    \[
        \partial_{2}Q\equiv q \text{ in } \mL^{\infty}((0,T)\times\R).
    \]
\end{theo}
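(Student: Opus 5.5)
We prove the identity first for smooth, compactly perturbed data, and then pass to general $\qn\in\mL^\infty$ using the two stability estimates, \cref{thm:ex_uni_c_l} and \cref{thm:ex_uni_HJ}. The classical route is vanishing viscosity, carried out in parallel on both levels.

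\textbf{Step 1: parabolic regularization.} For $\eps>0$ and a mollified datum $\qneps=\qn*\phi_{\teps}$, consider
\[
\partial_t \qeps+\partial_x f(\qeps)=\eps\partial_{xx}\qeps,\qquad \partial_t Q_{\teps}+f(\partial_x Q_{\teps})=\eps\partial_{xx}Q_{\teps},
\]
with $Q_{\teps}(0,x)=\int_0^x\qneps$. Replacing $f$ by a globally Lipschitz, smooth modification that agrees with $f$ on $[\qmin,\qmax]$ is harmless by the maximum principle. The viscous HJ problem has a unique smooth solution, and differentiating it in $x$ shows that $\partial_x Q_{\teps}$ solves the viscous conservation law with datum $\qneps$. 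By uniqueness of smooth bounded solutions of the parabolic problem, $\partial_x Q_{\teps}=\qeps$, i.e.\ $Q_{\teps}(t,x)=Q_{\teps}(t,0)+\int_0^x\qeps(t,y)\dd y$.

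\textbf{Step 2: limit $\eps\to0$ on both levels.}
\begin{itemize}
\item \emph{Conservation law.} By Kruzhkov's theory, the $\mL^1_\loc$ stability of \cref{thm:ex_uni_c_l}, and the standard vanishing viscosity convergence, $\qeps\to q$ in $\mC([0,T];\mL^1_\loc(\R))$. Here the mollification error $\|\qneps-\qn\|_{\mL^1_\loc}\to0$ is absorbed by the stability estimate.
\item \emph{HJ equation.} $Q_{\teps}$ is uniformly Lipschitz in $x$, since $|\partial_xQ_{\teps}|\le\|\qn\|_\infty$, and in $t$, since $\partial_tQ_{\teps}=\eps\partial_{xx}Q_{\teps}-f(\qeps)$ is bounded. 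The time bound uses the standard fact that the viscous approximation is Lipschitz in time in the weak sense, or it can be obtained from a comparison argument with $Q_{\teps}(\cdot+h)$.
\item \emph{Identification of the HJ limit.} By Arzelà--Ascoli and the stability of viscosity solutions (vanishing viscosity, \cite{Crandall1986}), $Q_{\teps}\to \bar Q$ locally uniformly, where $\bar Q$ is a viscosity solution with datum $\lim\int_0^x\qneps=\Qn$. By uniqueness in \cref{thm:ex_uni_HJ}, $\bar Q=Q$.
\end{itemize}

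\textbf{Step 3: identification of $\partial_2Q$.} Locally uniform convergence $Q_{\teps}\to Q$ implies $\partial_xQ_{\teps}\rightharpoonup\partial_2Q$ in distributions. On the other hand, $\partial_xQ_{\teps}=\qeps\to q$ in $\mL^1_\loc$. Hence $\partial_2Q=q$ a.e., and since both functions are bounded the identity holds in $\mL^\infty$.

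\textbf{Main obstacle.} The delicate point is the rigorous justification of the uniform time-Lipschitz or equicontinuity bound for $Q_{\teps}$, together with the behaviour of the constant $Q_{\teps}(t,0)$, when the data is merely $\mL^\infty$. I would handle it as follows:
\begin{itemize}
\item Use the comparison principle for the viscous HJ equation with the sub- and supersolutions $\Qn^\eps\pm Ct$, where $C=\sup_{[\qmin,\qmax]}|f|+\eps\|\partial_{xx}\Qn^\eps\|$.
\item Avoid the $\eps$-dependent second term by instead comparing $Q_{\teps}(t+h,\cdot)$ with $Q_{\teps}(t,\cdot)$ and controlling $\|Q_{\teps}(h)-\Qn^\eps\|_\infty\le Ch^{1/2}$. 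This bound follows from heat-kernel estimates for a Lipschitz datum combined with the bounded nonlinearity.
\end{itemize}
Equicontinuity then suffices for Arzelà--Ascoli, and the rest of the argument is standard.
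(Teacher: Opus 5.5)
Your proof is correct. It shares the paper's skeleton: mollify, add viscosity, use that the primitive of the viscous conservation-law solution solves the viscous HJ equation, then pass to the limit. The difference is in how you identify the HJ limit.

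\emph{The paper's route.} It truncates the datum to $\qn\chi_{(-R-LT,R+LT)}$ and localizes via the cone of dependence. It keeps the mollification parameter $\eps$ and the viscosity $\nu$ separate, and reduces everything to $\mL^1$ statements about conservation laws. The HJ stability estimate handles $\eps\to0$. For the viscous step, $\|\QepsR-\QepsnuR\|_\infty$ is bounded by the $\mL^1$ distance between $\qepsR$ and $\qepsnuR$, by invoking \cref{eqn:HJ_CL_relation} at $\nu=0$. For the smooth truncated datum, that relation is essentially the statement being proved.

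\emph{Your route.} You identify $\lim Q_\eps$ intrinsically, via Arzel\`a--Ascoli, Crandall--Lions stability of viscosity solutions under vanishing viscosity, and uniqueness from \cref{thm:ex_uni_HJ}. This is exactly the ingredient needed to justify the step the paper takes for granted. You need no truncation, since local uniform convergence of $Q_\eps$ controls the constant $Q_\eps(t,0)$ automatically.

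What your route costs is two things:
\begin{enumerate}
\item[(i)] a uniform time modulus for $Q_\eps$;
\item[(ii)] an $\eps$-uniform $\mL^1_\loc$ (e.g.\ weighted) contraction for the \emph{viscous} equation, to absorb the mollification error.
\end{enumerate}
Point (ii) is needed because \cref{thm:ex_uni_c_l} concerns only the inviscid problem. The paper avoids (ii) by letting $\nu\to0$ at fixed $\eps$ first and by working with compactly supported data.

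Point (i) is easier than you fear. Viscosity and mollification share the parameter, so with $\phi_\eps=\eps^{-1}\phi(\cdot/\eps)$,
\[
\eps\|\partial_{xx}\Qn^{\eps}\|_\infty=\eps\|\qn*\phi_\eps'\|_\infty\le\|\qn\|_\infty\|\phi'\|_{\mL^1}.
\]
Hence your barriers $\Qn^{\eps}\pm Ct$ already have $C$ independent of $\eps$. Comparing $Q_\eps(\cdot+h,\cdot)$ with $Q_\eps$ then gives a uniform Lipschitz bound in $t$, with no $h^{1/2}$ heat-kernel estimate.

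One small correction: for $f$ merely in $\mW^{1,\infty}_\loc$, no smooth function can agree with $f$ on $[\qmin,\qmax]$. A globally Lipschitz extension suffices. Mild (Duhamel) solutions exist for both viscous problems, and differentiating the HJ Duhamel formula gives the conservation-law one. The extension does not affect the limit HJ problem: any $\Phi$ touching the limit from above or below at $(t_0,x_0)$ has $\partial_2\Phi(t_0,x_0)\in[\qmin,\qmax]$, because the limit's spatial difference quotients lie in that interval.
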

\begin{proof}
    This is sketched in \cite[Thm.~1.1]{karlsen}. We, however, give a proof for the more general setup as this is crucial for our later analysis. Let \(\phi\in\mC^{1}_{\text{c}}((-42,T)\times\R)\) with \(R \in \R_{>0}\) such that $\supp(\phi) \subset [-R,R]$ be arbitrary but fixed. Let $\qnR :\equiv \qn \chi_{_{(-R-LT,R+LT)}}$ on \(\R\) and $L\coloneqq \|f'\|_{L^{\infty}((\qmin,\qmax))}$.
    Consider---with $\phi_{\teps} \in \mC^{\infty}(\R)$ for \(\eps\in\R_{>0}\) the standard mollifier---the two viscous problems
    \begin{align*}
        \partial_{1}q+\partial_{2}f(q)&=\nu \partial_{2}^{2}q & \partial_{1}Q+f(\partial_{2}Q)&=\nu \partial_{2}^{2}Q && \text{on } (0,T)\times\R\\
        q(0,\cdot)&\equiv \qnR * \phi_\teps &  Q(0,\cdot)&= \int_{0}^{\cdot}\Big(\qnR * \phi_\teps\Big)(y)\dd y &&\text{on }\R.
\end{align*}
    For \(\eps,\nu \in\R_{>0}\), there exist, by standard parabolic theory, unique classical solutions \(\qepsnuR,\QepsnuR \in \mC^{\infty}([0,T]\times \R)\). By \cref{thm:ex_uni_c_l} and \cref{thm:ex_uni_HJ},
    there exist for $\nu = 0$ two unique entropy/viscosity solutions, that is,  \(\qepsR\) and \(\QepsR\) with smooth and truncated initial datum $(\qnR * \phi_\teps)(y)$ and its primitive for the corresponding equations. By the same results, there exist two unique entropy/viscosity solutions $\qR$ and $\QR$  with unregularized but truncated initial datum $\qnR$ and its primitive for the respective equations. 
    By construction, we have the following identity:
    \begin{align}\label{eqn:HJ_CL_relation}
        \QepsnuR(t,x) = \int_{-\infty}^x \qepsnuR(t,y) \dd y - \int_{-\infty}^0 (\qnR * \phi_\teps)(y)\dd y, 
    \end{align}
   where the last integral, which has a constant value, compensates for $0$ as the lower bound in the initial datum. Thus, we obtain 
    \begin{align}\label{eqn:smooth_CL_HJ}
        \partial_{x}\QepsnuR(t,x)=\qepsnuR(t,x), \qquad \forall (t,x)\in (0,T) \times \R.
    \end{align}
By the cone of dependency property for conservation laws and \HJ equations (\cite[Chap.~III, Rmk.~3.13]{bardi2009optimal}), we know that
\begin{align}
    Q &\equiv \QR\ \quad \text{ and } \quad \ q \equiv \qR \qquad \text{ on } (-R,R).
\end{align}
To show that $\partial_2 Q$ and $q$ coincide, we take advantage of the previously stated stability estimate in \cref{thm:ex_uni_HJ} and the equivalence for smooth solutions in \cref{eqn:smooth_CL_HJ}. 

To this end, let \(\phi\in \mC^{\infty}_{\text{c}}(\R)\), and estimate for any \(t\in (0,T)\) the difference between \(\partial_{2}Q\) and \(q\) 
distributionally as follows, with $K\coloneqq (-R,R) \supset \supp \phi$:
    \begin{align}
        &\bigg|\int_\R \phi'(x) Q(t,x) +  \phi(x) q(t,x) \dd x\bigg|\\
        &\leq \|\phi'\|_{\mL^1(\R)}\Big(\big\|\QR-\QepsR \big\|_{\mL^\infty((0,T)\times K)} +\big\|\QepsR -\QepsnuR\big\|_{\mL^\infty((0,T)\times K)}  \Big) \\
        &\quad + \|\phi\|_{\mL^\infty(\R)}\Big(\big\|\qR-\qepsR\big\|_{\mL^1((0,T)\times K)}+\big\|\qepsR-\qepsnuR\big\|_{\mL^1((0,T)\times K)}\Big) \\
        &\quad + \int_\R \phi'(x) \QepsnuR(t,x) +  \phi(x) \qepsnuR(t,x) \dd x
    \end{align}
where we have added several ``zeros.''
The latter integral on the right-hand side vanishes by \cref{eqn:smooth_CL_HJ}. 
The terms
$\|\QR-\QepsR \|_{\mL^\infty((0,T)\times K)}$ vanish for $\eps \rightarrow 0$ by the stability result in \cref{thm:ex_uni_HJ}. As \cref{eqn:HJ_CL_relation} also holds for $\nu=0$, we can estimate the difference of the integrands in $\big\|\QepsR -\QepsnuR\big\|_{\mL^\infty((0,T)\times K)}$ in terms of its $\mL^1$-norm, that is,
\begin{align}
    \big\|\QepsR-\QepsnuR\big\|_{\mL^\infty((0,T)\times K)}  \leq \big\|\qepsR-\qepsnuR\big\|_{\mL^1((0,T)\times K)},
\end{align}
which vanishes by \cite[Thm.~6.3.1]{Dafermos2016}, as does $\|\qepsR-\qepsnuR\|_{\mL^1((0,T)\times K)}$. Finally, $\|\qR-\qepsR\|_{\mL^1((0,T)\times K)}$ vanishes by \cite[Thm.~6.3.2]{Dafermos2016}. Thus, we have $\partial_2 Q \equiv q$ in a distributional sense. As \(Q\in \mW^{1,\infty}_\loc((0,T)\times \R)\) with \(\partial_2 Q \in \mL^{\infty}((0,T) \times \R)\) and \(q\in \mL^{\infty}(\R)\), this identity holds a.e.
\end{proof}

\section{The Singular Limit Problem}\label{sec:singular_limit}
In this section, by taking advantage of the \HJ equations and their relation to conservation laws from \cref{theo:rel_HJ_cl}, we will demonstrate that if the nonlocal kernels \(\gammap\) and \(\gammam\) in \cref{eq:nonlocal_c_l} both approach a Dirac distribution (simultaneously, but not necessarily at the same ``speed''), then the solution to  \cref{eq:nonlocal_c_l} converges to the local entropy solution of \cref{eq:local_c_l}.

To formulate our result in precise terms, we will need to parametrize our nonlocal kernels and make the solution depend on them. This is done in the following.

\begin{ass}[singular limit convergence: nonlocal kernels and velocities]\label{ass:general}
Assume \(\qn \in\mL^{\infty}(\R;\R_{\geq0})\) and the following:
\begin{enumerate}
\item Nonlocal kernels:
\((\gammapk,\gammamk)_{k\in \N}\subset \mL^{\infty}(\R_{<0};\R_{\geq 0})\times\mL^{\infty}(\R_{>0};\R_{\geq 0})\)
T
such that
\begin{itemize}
\item \( \|\gammamk\|_{\mL^{1}(\R_{>0})}=1\),\ \(\|\gammapk\|_{\mL^{1}(\R_{<0})}=1\) \(\forall k\in\N\)
\item \(\gammamk\) and \(\gammapk\) are monotonically decreasing and increasing on \(\R_{>0}\) and \(\R_{<0}\), respectively,
 \(\forall k\in\N\)
\item \(\gammamk,\gammapk \xrightarrow{k\rightarrow\infty} \delta_0,  \text{ in } \mD'(\R)\)
\end{itemize}

\item Flux: \(f\in \mW^{1,\infty}_{\loc}(\R)\), without loss of generality \(f(0)=0\), and
\[
    s \mapsto f(s)/s, \quad  s\in \R_{\neq 0}
\]
admits a locally Lipschitz extension to \(\R\).

\item Nonlocal velocity \(V\in\mW^{1,\infty}_{\loc}(\R^{2})\) with  \(\partial_{1}V\geqq0,\ \partial_{2}V\leqq 0\), and ``compatibility'' with the flux, that is,
\begin{equation}
    sV(s,s)=f(s)\quad \forall s\in\R.\label{eq:flux_velocity_compatibility}
\end{equation}
\end{enumerate}
\end{ass}

\begin{rem}[``compatibility'' of \(V\) and \(f\)]\label{rem:V_f_compatibility}
Because we aim to approximate entropy solutions nonlocally, we require that \(V\) contains, in some sense, the structure of \(f\). This is guaranteed by \cref{eq:flux_velocity_compatibility}. This condition is no restriction on \(f\) as we can choose \(V\) as a ``natural'' and somewhat ``symmetric'' Engquist--Osher flux splitting \cite{EngquistOsher1980},  as in \cref{eq:nonlocal_velocity_natural}. An even simpler choice would be
\begin{equation}
V(a,b)=\tilde{V}\big(\tfrac{a+b}{2}\big)+\kappa (a-b),\ (a,b)\in [\qmin,\qmax]^{2}
\end{equation}
with \(\tilde{V}(s)\coloneqq \nicefrac{f(s)}{s},\ s\in\R\), and \(\kappa\in\R,\ \kappa\geq \|\tilde V'\|_{\mL^{\infty}((\qmin,\qmax))}\).

Note also that the condition on the flux in \cref{item:2} of \cref{ass:general} is satisfied if we assume that \(f|_{(-\eps,\eps)}\in \mW^{2,\infty}((-\eps,\eps))\) for some \(\eps\in\R_{>0}\).
Note also that \cref{item:2} of \cref{ass:general} means that the velocity of the conservation law is finite at zero density (i.e., speed is finite in the vacuum case).
\end{rem}

\begin{notation}[abbreviations]\label{notation}
    For the nonlocal kernels \((\gammamk,\gammapk)_{k\in \N}\) as in \cref{ass:general},  for all $k\in \N$, we denote the unique weak solution to \cref{eq:nonlocal_c_l} by \(\qk\in \mC\big([0,T];\mL^{1}_{\loc}(\R;\R_{\geq 0})\big)\cap\mL^{\infty}((0,T);\mL^{\infty}(\R_{\geq 0}))\). In addition, $\forall  (t,x)\in(0,T)\times\R$, we define $\Wpk[\qk](t,x)\coloneqq \int_{x}^{\infty} \gammapk(x-y)\qk(t,y)\dd y$
    and $\Wmk[\qk](t,x)\coloneqq \int_{-\infty}^{x} \gammamk(x-y)\qk(t,y)\dd y.$  
\end{notation}
\begin{lem}[properties satisfied by the kernels]\label{lem:kernel_vanishing_first_moments}
For the kernel class considered in \cref{ass:general}, it holds that $\forall x\in \R_{>0}$,
\begin{align}
          &\lim_{k\rightarrow\infty} \|  \gammamk\|_{\mL^{1}((x,\infty))} = 0
        \hspace{-1cm}&\text{and} \qquad &\lim_{k\rightarrow\infty}  \|  \gammapk\|_{\mL^{1}((-\infty,-x))} = 0,
\label{eq:kernel_vanishing_tail} \\
       &\lim_{k\rightarrow\infty} \textstyle\int_{0}^{x}  y\gammamk(y) \dd y  = 0  \hspace{-1cm}&\text{and} \qquad &\lim_{k\rightarrow\infty} \textstyle\int_{0}^{x} y\gammapk(-y) \dd y = 0,\\
   &\lim_{k\rightarrow \infty} \gammamk\big(x^\pm\big) = 0  \hspace{-1cm}&\text{and} \qquad  &\lim_{k\rightarrow \infty} \gammapk\big(-x^\pm\big) = 0,
\end{align}
where, for example, $\gammamk(x^\pm)$ refers to the left- and right-sided limit of $\gammamk$ at $x\in \R_{>0}$.
\end{lem}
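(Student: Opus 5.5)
By the symmetry $y\mapsto -y$, it suffices to treat $\gammamk$ on $\R_{>0}$; the statements for $\gammapk$ follow by applying the same argument to $y\mapsto\gammapk(-y)$. The plan is to derive everything from the tail statement in \cref{eq:kernel_vanishing_tail}, using only unit mass, nonnegativity and monotonicity from \cref{ass:general}.

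\textbf{Step 1 (tails).} Fix $x\in\R_{>0}$ and choose $\varphi\in\mC^\infty_{\text{c}}(\R;[0,1])$ with $\varphi\equiv1$ on $[-x/2,x/2]$ and $\supp\varphi\subset(-x,x)$. Since $\gammamk\to\delta_0$ in $\mD'(\R)$, we have $\int_0^\infty\gammamk\varphi\dd y\to\varphi(0)=1$. Unit mass and nonnegativity then give
\[
\|\gammamk\|_{\mL^1((x,\infty))}\le \int_0^\infty\gammamk(1-\varphi)\dd y=1-\int_0^\infty\gammamk\varphi\dd y\to0 .
\]

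\textbf{Step 2 (point values).} For $0<x$, monotonic decrease gives
\[
\tfrac{x}{2}\,\gammamk(x^-)\le\int_{x/2}^{x}\gammamk(y)\dd y\le\|\gammamk\|_{\mL^1((x/2,\infty))}\to0
\]
by Step 1 applied at $x/2$. Since $0\le\gammamk(x^+)\le\gammamk(x^-)$, both one-sided limits tend to zero.

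\textbf{Step 3 (first moments).} This is the only step requiring a little care, because the mass concentrates near $0$ but not uniformly in $k$. For arbitrary $\delta\in(0,x)$, split the integral as
\[
\int_0^x y\gammamk(y)\dd y\le \delta\int_0^\delta\gammamk\dd y+x\int_\delta^x\gammamk\dd y\le\delta+x\,\|\gammamk\|_{\mL^1((\delta,\infty))}.
\]
By Step 1 the last term vanishes as $k\to\infty$, so $\limsup_k\int_0^xy\gammamk(y)\dd y\le\delta$. Letting $\delta\to0$ concludes the argument. Monotonicity of the kernels is needed only in Step 2; Steps 1 and 3 use just unit mass, nonnegativity and distributional convergence.
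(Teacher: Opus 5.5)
Your proof is correct and follows the paper's route: the paper simply states that all three limits are a direct consequence of $\gammamk,\gammapk\to\delta_0$ in $\mD'(\R)$, and you supply the details it omits. You also make explicit what that one-line argument tacitly uses: unit mass and nonnegativity give the vanishing tails (and from them the first moments), and monotonicity gives the one-sided point values.
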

\begin{proof}
The proof of the claimed limits is a direct consequence of the distributional convergence of $\gammamk$ and $\gammapk$ to $\delta_0$ in a distributional sense.
\end{proof}
Now, we prove our main theorem.
\begin{theo}[nonlocal approximation of entropy solutions]\label{theo:singular_limit_convergence}
Let \cref{ass:general} hold. Then, the nonlocal solution \((\qk)_{k\in\N}\subset \mC\big([0,T];\mL^{1}_{\loc}(\R)\big)\cap\mL^{\infty}((0,T);\mL^{\infty}(\R))\)  as introduced in \cref{notation} and the related nonlocal terms converge weak-star to the entropy solution \(q \in \mL^{\infty}((0,T)\times\R)\) of the local conservation law \cref{eq:local_c_l}, that is,
\begin{equation}
\qk,\, \Wpk[\qk],\, \Wmk[\qk]\xrightharpoonup[k\rightarrow \infty]{*} q \text{ in } \mL^{\infty}((0,T)\times\R).\label{eq:weak_star:convergence}
\end{equation}
\end{theo}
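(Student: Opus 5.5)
We argue on the Hamilton--Jacobi level, as announced in the outline. For each \(k\in\N\) we set
\[
\Qk(t,x)\coloneqq \int_0^x \qk(t,y)\dd y - \int_0^t \qk(s,0)V\big(\Wmk[\qk],\Wpk[\qk]\big)(s,0)\dd s,
\]
so that, at least for smooth data, \(\partial_2\Qk=\qk\) and \(\partial_1\Qk = -\qk V(\Wmk[\qk],\Wpk[\qk])\). By \cref{item:1} of \cref{theo:existence_uniqueness_nonlocal}, \(0\le \qk\le \qmax\). Hence \(\Wpk[\qk],\Wmk[\qk]\in[0,\qmax]\), and the \(\Qk\) are uniformly Lipschitz on \([0,T]\times\R\) with \(\Qk(0,\cdot)=\Qn\). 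Arzel\`a--Ascoli then gives a subsequence with \(\Qk\to Q^{\ts}\) locally uniformly.

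The uniform limit determines the weak-star limits. Indeed, \(\partial_2\Qk=\qk\to \partial_2 Q^{\ts}\) weak-star in \(\mL^\infty\), because weak-star limits of bounded derivatives are identified distributionally. The nonlocal terms have the same limit: for a test function \(\phi\), moving the kernel onto \(\phi\) gives \(\int \phi\,\Wmk[\qk] = \int (\check\gammamk*\phi)\,\qk\). Since \(\|\check\gammamk*\phi-\phi\|_{\mL^1}\to0\) by \cref{lem:kernel_vanishing_first_moments}, this has the same limit as \(\int\phi\,\qk\); the argument for \(\Wpk\) is identical. By \cref{theo:rel_HJ_cl} and uniqueness in \cref{thm:ex_uni_HJ}, it therefore suffices to show that \(Q^{\ts}\) is the viscosity solution of \cref{eq:H_J}. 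The whole sequence then converges, and \(\partial_2 Q^{\ts}=q\).

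For the viscosity property we first assume \(\qn\in\mC^\infty\). Then \(\qk\), and hence \(\Qk\), are smooth by \cref{item:2}, and \(\Qk\) solves
\[
\partial_1\Qk + \partial_2\Qk\, V\big(\Wmk,\Wpk\big)=0 .
\]
The key structural observation is that \(\Wmk[\qk](t,x)=\int_0^\infty\gammamk(z)\,\partial_2\Qk(t,x-z)\dd z\). Since \(\gammamk\) is decreasing, this equals the integral of the difference quotients \((\Qk(t,x)-\Qk(t,x-z))/z\) against the nonnegative weight \(-z\,\dd\gammamk(z)\), of total mass one. The analogous statement holds for \(\Wpk\) with forward quotients.

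Now let \(\Phi\) touch \(Q^{\ts}\) strictly from above at \((t_0,x_0)\). Then \(\Qk-\Phi\) has local maxima at points \((t_k,x_k)\to(t_0,x_0)\). At such a maximum, \(\partial_1\Qk=\partial_1\Phi\) and \(\partial_2\Qk=\partial_2\Phi\). Locally, \(\Qk(t_k,x_k)-\Qk(t_k,x_k-z)\ge \Phi(t_k,x_k)-\Phi(t_k,x_k-z)\), so the backward quotients of \(\Qk\) dominate those of \(\Phi\) for small \(z\), and the forward quotients are dominated by those of \(\Phi\).

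Tail contributions at large \(z\) are bounded by \(\qmax\) times a vanishing weight, using \cref{eq:kernel_vanishing_tail} together with the vanishing of \(x\gammamk(x^\pm)\). The Taylor error of \(\Phi\) is controlled by the vanishing first moments. Together these give \(\Wmk\ge \partial_2\Phi(t_0,x_0)-o(1)\) and \(\Wpk\le \partial_2\Phi(t_0,x_0)+o(1)\). Because \(\partial_1V\ge0\) and \(\partial_2V\le0\), it follows that \(V(\Wmk,\Wpk)\ge V(p,p)-o(1)\) with \(p=\partial_2\Phi(t_0,x_0)\ge0\), since \(p\) is the limit of \(\qk\ge0\). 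Multiplying by \(p\) and using compatibility \cref{eq:flux_velocity_compatibility} gives \(\partial_1\Phi+f(\partial_2\Phi)\le o(1)\) at \((t_0,x_0)\), which is the subsolution inequality in the limit. The supersolution inequality follows by the mirrored argument.

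The main obstacle is exactly this one-sided comparison. The maximum condition controls the difference quotients only locally, so the kernel weight must be split into a near part, handled by local maximality and Taylor expansion, and a far part, handled by the tail and moment estimates of \cref{lem:kernel_vanishing_first_moments}. The case \(p=0\) also needs attention, since then multiplying by \(p\) is harmless; this is where the sign of \(\qk\) is used.

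Finally we remove the smoothness of \(\qn\). Apply the smooth case to \(\qn*\phi_\teps\), and use \cref{eq:stability_nonlocal} uniformly in \(k\) together with the stability estimate in \cref{thm:ex_uni_HJ}. Uniformity in \(k\) of \cref{eq:stability_nonlocal} needs a check. Alternatively, one can run the same viscosity argument directly for Lipschitz \(\Qk\) using test-function touching, which avoids that issue.
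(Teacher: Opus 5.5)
Your core argument is correct and follows the paper's route. The ingredients match: the HJ reformulation, Arzel\`a--Ascoli, convergence of maximizers of $Q^k-\Phi$, and the one-sided bounds $\Wmk[\qk]\ge\partial_2\Phi-o(1)$ and $\Wpk[\qk]\le\partial_2\Phi+o(1)$. These bounds come from local maximality with a near/far split controlled by \cref{lem:kernel_vanishing_first_moments}, followed by monotonicity of $V$, the sign $\partial_2\Phi\ge 0$, and \cref{theo:rel_HJ_cl}. Your representation of $\Wmk,\Wpk$ as averages of difference quotients against $-z\,\dd\gamma(z)$ is a tidy replacement for the paper's smoothing of the kernels.

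The genuine gap is the last step, the passage from smooth to $\mL^\infty$ data, which you leave open.
\begin{itemize}
\item Your first route proves the result for the fixed datum $\qn*\phi_{\teps}$ and then sends $\varepsilon\to0$. This needs $\sup_k\|Q^k-Q^k_\varepsilon\|_{\mL^\infty((0,T)\times K)}\to0$. But \cref{eq:stability_nonlocal} is a statement for one fixed kernel pair, nothing in the paper makes it uniform in $k$, and you do not prove it. A $k$-uniform comparison principle for the nonlocal HJ equation, exploiting $\partial_1V\ge0\ge\partial_2V$ and the kernel monotonicity, might give it, but that is a separate result.
\item Your second route is also unjustified as written. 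At a touching point, a merely Lipschitz $Q^k$ need not be differentiable, and the nonlocal HJ equation holds only a.e., so the first-order conditions you use are unavailable. You would first have to show that $Q^k$ is a viscosity solution of the frozen linear transport equation $\partial_tQ+V(\Wmk[\qk],\Wpk[\qk])\,\partial_xQ=0$, e.g.\ via its characteristic representation. You do not address this.
\end{itemize}

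The paper sidesteps both issues by coupling $\varepsilon$ to $k$ instead of taking the limits successively. For each fixed $k$, \cref{eq:stability_nonlocal} yields $\varepsilon_k$ with $\sup_{(0,T)\times(-k,k)}\big|\int_0^x (q^k_{\varepsilon_k}-\qk)(t,y)\dd y\big|\le 1/k$; see \cref{eqn:subsequence1k}. The compactness and viscosity argument is then run on the smooth diagonal sequence $\tilde Q^k$ built from $q^k_{\varepsilon_k}$, whose initial data converge to $\int_0^x\qn$. The locally uniform limit is transferred to the primitives of $\qk$ by the $1/k$ bound.

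Your smooth-case argument never uses that the initial datum is fixed. It only uses that each $\tilde Q^k$ is a classical solution of the $k$-th nonlocal HJ equation with $0\le\partial_2\tilde Q^k\le\qmax$. So it applies verbatim to this diagonal sequence and closes the gap.

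Relatedly, the time correction $\int_0^t \qk(s,0)V(\cdots)(s,0)\dd s$ in your definition of $Q^k$ only makes sense for the smooth approximants, since $\qk(s,0)$ is not defined for a merely $\mL^\infty$ solution. The paper applies the correction only to $q^k_{\varepsilon_k}$ and otherwise works with the plain primitive $\int_0^x\qk$, which is also uniformly Lipschitz in $t$.
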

\begin{proof}
The proof consists of reformulating this convergence on the \HJ level, passing to the limit by taking advantage of sub- and supersolutions for \HJ equations as in \cref{defi:HJ_viscosity}, and using the relation between viscosity solutions of \HJ equations and entropy solutions for (local) scalar conservation laws as stated in \cref{theo:rel_HJ_cl}.

\begin{enumerate}[label=(\alph*),resume]

\item \textit{Smooth approximation and compactness:}
For each $\varepsilon \in \R_{>0}$, we denote by \(\qkeps \in \mC^{\infty}([0,T]\times\R)\) the solution to \cref{eq:nonlocal_c_l} for the initial datum \(\qneps \coloneqq \qn  \ast \phi_{\teps}\). By the smoothness of the initial datum and the nonlocal regularity property in  \cref{item:2} of \cref{theo:existence_uniqueness_nonlocal}, this is a classical solution.

By \cref{item:3} of \cref{theo:existence_uniqueness_nonlocal}, we can choose for every $k\in \N$, an $\eps_{\tk} \in \R_{>0}$ small enough that
\begin{align}\label{eqn:subsequence1k}
    \bigg\|(t,x) \mapsto \int_0^x \qkeps(t,y) -  \qk(t,y) \dd y\bigg\|_{\mL^{\infty}((0,T)\times (-k,k))} \leq \tfrac{1}{k}.
\end{align}
For convenience, we define \(\tqk :\equiv \qkeps\) and
    \begin{align}\label{eqn:defi_tQk}
\tQk  (t,x)&\coloneqq\int_{0}^{x}\tqk (t,y)\dd y - \talphak(t),
\end{align}
for \((t,x)\in (0,T)\times\R\), with 
\begin{align}
\forall t\in [0,T]  \quad \talphak(t) \coloneqq \int_0^t  \tqk(s,0) V\big(\Wmk[\tqk](s,0),\Wpk[\tqk](s,0)\big) \dd s.\label{eq:defi_alpha_k}
\end{align}
Here, the $\talphak$ term is introduced so that $\tqk$ satisfies the nonlocal \HJ equation, that is,
\begin{align} \label{eqn:nonlocal_HJ_solution}
    \partial_1 \tQk + \partial_2 \tQk V\big(\Wmk[\partial_2 \tQk],\Wpk[\partial_2 \tQk]\big) &= 0.
\end{align}

Indeed, by differentiating $\tQk$  with respect to $t\in[0,T]$, using the fact that $\tqk$ is a classical solution of \cref{eq:nonlocal_c_l}, and integrating by parts, we obtain \cref{eqn:nonlocal_HJ_solution}. In particular, thanks to \cref{eqn:defi_tQk}, it holds that
\begin{align}\label{eqn:Qx_q_equivalence}
    \partial_2 \tQk \equiv \tqk \text{ on } [0,T]\times\R,
\end{align}
which is also required to show \cref{eqn:nonlocal_HJ_solution}.

In addition, we have by \cref{eq:defi_alpha_k} that $\talphak(0) = 0$, and thus by uniform convergence, $\talphas(0) = 0$. From this and \cref{eqn:defi_tQk}, we obtain $\tQk(0,x) = \int_0^x \tqk (0,y) \dd y \ \forall k$ and---again by uniform convergence---$\tQs(0,x) = \int_0^x \qn (y) \dd y$.

Since $(\tqk)_{k\in \N}$ is a sequence of classical solutions of \cref{eq:nonlocal_c_l} and satisfies the uniform maximum principle, we find for every $k\in\N$ and every compact \(K\subset\R\) that 
    \begin{equation}
    \begin{aligned}
    \|\tQk\|_{\mL^\infty((0,T)\times K)} &\leq \sup \{|x| : x\in K\} \, \|\qn \|_{\mL^\infty(\R)} + T\|\qn \|_{\mL^\infty(\R)}\|V\|_{\mL^\infty((\qmin,\qmax)^2)}\hspace{-.7cm}\\
   \| \partial_{1}\tQk\|_{\mL^{\infty}((0,T)\times \R)} &\leq \|\qn \|_{\mL^{\infty}(\R)}\|V\|_{\mL^\infty((0,\qmax)^2)}\\
\|\partial_{2}\tQk\|_{\mL^{\infty}((0,T)\times\R)}&\leq \|\qn \|_{\mL^{\infty}(\R)}.
    \end{aligned}
    \label{eq:Lipschitz_bounds_tQk}
    \end{equation}
Hence, by the Ascoli--Arzel\`{a} theorem \cite[Thm.~1.33]{adams2003sobolev}, there exists \(\tQs \) and a subsequence, again denoted by $k$, such that \(\lim_{k\rightarrow \infty} \|\tQk-\tQs \|_{\mL^{\infty}((0,T)\times K)}\) = 0. 

Because $(\talphak)_{k\in \N}$ as defined in \cref{eq:defi_alpha_k} is uniformly bounded in $\mW^{1,\infty}((0,T))$ with respect to $k\in \N$, we obtain uniform convergence on a subsequence to $\talphas \in \mW^{1,\infty}((0,T))$ for $k\rightarrow \infty$.

Since $(\tQk)_{k\in \N}$ converges locally uniformly and $( \talphak)_{k\in \N}$ converges uniformly on $(0,T)$ (recall that $\tQk = \Qkepsk - \talphak$), the sequence $(\Qkepsk)_{k\in \N}$ converges uniformly on compact sets too. By the choice of the sequence $\eps_{\tk}$ (compare \cref{eqn:subsequence1k}), we obtain $\Qkepsk - \Qk \rightarrow 0$ in $\mL^{\infty}_\loc((0,T)\times \R)$. These uniform convergences on compact sets imply uniform convergence of $(\Qk)_{k\in \N}$ on compact sets too. 
If we define $\Qs \coloneqq \lim_{k\rightarrow \infty} \Qk$, the relation $\tQs = \Qs - \talphas$ on \([0,T]\times\R\) holds.

Now, we prove that \(\tQs\) is a viscosity solution of the \HJ equation, and by \cref{theo:rel_HJ_cl}, this means that \(\partial_2 \tQs\) is an entropy solution of \cref{eq:local_c_l}. As $\talphas$ is only time dependent, this results in  \(\partial_2 \Qs\) being (also) an entropy solution of \cref{eq:local_c_l}, from which the claim follows.

\end{enumerate}

The next part of the proof can be viewed as a special case of Berge’s maximum theorem \cite{Berge1963}. Since our situation involves the particular subsequences introduced above, we provide a shorter proof that is limited to this setting.

\begin{enumerate}[label=(\alph*)]\setcounter{enumi}{1}
\item \textit{Convergence of optimizers under uniform convergence:}\label{item:convergence_optimizers} Let \(\Phi \in \mC^\infty([0,T]\times \R)\) be arbitrary but fixed. Let \((t_0,x_0)\in (0,T)\times \R\) be chosen such that \(\tQs -\Phi\) has a local (not necessarily strict) maximum in \((t_0,x_0)\).
    Let 
    \begin{align}\label{eqn:strict_local_maximum_test_function}
        \tilde \Phi(t,x) \coloneqq \Phi(t,x) + (t-t_0)^2 + (x-x_0)^2.
    \end{align} It holds that \(\partial_1 \tilde \Phi(t_0,x_0)  = \partial_1 \Phi(t_0,x_0) \)  \(\partial_2 \tilde \Phi(t_0,x_0)  = \partial_2 \Phi(t_0,x_0) \), and so $\tQs  - \tilde \Phi$ now has a strict local maximum in \((t_0,x_0).\) Defining $\overline{R} = \min\{1,t_0,T-t_0\},$ we obtain a sufficiently small neighborhood around \((t_{0},x_{0})\) still in \([0,T]\times \R\)  so that $\exists R \in (0,\overline R)$ such that
    \begin{align}
         \forall (t,x) \in B_{R}((t_0,x_0)) \! \setminus \! \{(t_0,x_0)\} \, : \, (\tQs -\tilde \Phi)(t,x) < (\tQs -\tilde \Phi)(t_0,x_0). \label{eqn:loc_max}
    \end{align} 
    Because $(t_0,x_0)$ is a strict local maximum, for any $r\in (0,R)$, there exists $\kappa_r \in\R_{>0}$ such that
    \begin{align}
        \forall (t,x) \in B_{R}((t_0,x_0)) \setminus B_{r}((t_0,x_0))  : \ (\tQs -\tilde \Phi)(t,x) \leq (\tQs -\tilde \Phi)(t_0,x_0) - \kappa_r. \label{eqn:glob_max}
    \end{align}
    Now, choose $I = (x_0-1,x_0+1)$ and, to apply \cref{eqn:subsequence1k}, a $K \in \N$ large enough that $I \subset (-K,K)$ and $k \geq K$.
    
    Because of the locally uniform convergence of $\tQk$ to $\tQs$, we can choose any $r\in (0,R)$ $K_r \in \N_{\geq K}$ large enough that
 \begin{align}\label{eqn:tQk-Qs_kappa}
        \|\tQk - \tQs\|_{L^\infty((0,T) \times I)} &\leq \tfrac{\kappa_r}{3} \ \forall k\in\N_{\geq K_r}.
        \end{align}

        Now, we estimate $\tQk - \tilde \Phi$ from above in $B_R \setminus B_r$. First, $\forall (t,x) \in B_{R}((t_0,x_0))\setminus B_r((t_0,x_0))$, we find---recalling that by definition of $\overline{R}$, we have $B_R(t_0,x_0) \subset (0,T)\times I$---that
    \begin{align}
        (\tQk  -\tilde \Phi)(t,x) &\leq \|\tQk  -\tQs\|_{\mL^\infty((0,T) \times I)} +(\tQs-\tilde \Phi)(t,x) \\
        & \leq (\tQs-\tilde \Phi)(t_0,x_0) - \tfrac{2\kappa_r}{3} \ \forall k\in\N_{\geq K_r},
    \end{align}
    with the last line implied by \cref{eqn:glob_max} and \cref{eqn:tQk-Qs_kappa}.
   From below, we obtain $\forall (t,x) \in B_r(t_0,x_0)$ (i.e., on the smaller ball around \((t_{0},x_{0})\)),
    \begin{align}
        (\tQk  -\tilde \Phi)(t_0,x_0) &\geq -\|\tQk  -\tQs\|_{\mL^\infty((0,T) \times I)} +(\tQs -\tilde \Phi)(t_0,x_0) \\
        & \geq (\tQs-\tilde \Phi)(t_0,x_0) - \tfrac{\kappa_r}{3}\ \forall k\in\N_{\geq K_r}.   \end{align}
     Thus, by continuity of $\tQk$, there is at least one point $(t_k,x_k) \in B_r(t_0,x_0)$ that is a global maximum of $(\tQk  -\tilde \Phi )|_{B_R(t_0,x_0)}$. Because this holds for any $r\in (0,R)$, we find for \(k\rightarrow\infty\) that the sequence of global maxima $(t_k,x_k)_{k\in \N}$ of $\big(\tQk -\tilde \Phi\big)|_{B_R(t_0,x_0)}$ converges to the global maximum  \((t_0,x_0)\) of \((\tQs - \tilde \Phi)|_{B_{R}(t_{0},x_0)}\).

     The same arguments hold for a local minimum.
\end{enumerate}
With this at hand, we can finally prove that $\tQk$ indeed converges to the viscosity solution of the \HJ equation. To show that $\tQs $ is indeed a viscosity solution of the \HJ equation, we investigate whether the sequence $\tQk $ 
satisfies the super-/subsolution property of \cref{defi:HJ_viscosity} in the limit.

\begin{enumerate}[label=(\alph*),resume]
\item \textit{Viscosity solutions of the \HJ equation:}
For the subsolution property, we need to prove the following inequality for all $\Phi \in \mC^\infty([0,T]\times\R)$:
\begin{align}\label{eqn:sub_solution_prove}
    \partial_1 \Phi(t_0,x_0) + \partial_2 \Phi(t_0,x_0)V\big(\partial_2 \Phi(t_{0},x_{0}),\partial_2 \Phi(t_0,x_0)\big) \leq 0,
\end{align}
where $(t_0,x_0)\in(0,T)\times\R$ is a local maximum of $\tQs -\Phi$.
As in the previous part of this proof, we change $\Phi$ to $\tilde \Phi$ as in \cref{eqn:strict_local_maximum_test_function} such that we have a strict local maximum at \((t_{0},x_{0})\) and the first derivatives of $\tilde \Phi$ (as defined in \cref{eqn:strict_local_maximum_test_function}) and $\Phi$ coincide at this point. In addition, without violating this property, we truncate $\tilde \Phi$ outside $(t_0-1,t_0+1)\times(x_0-1,x_0+1)$ in a smooth way so that $\partial_{22} \tilde \Phi \in \mL^{\infty}((0,T)\times \R)$. Recall that, since $\tQk$ is smooth, it is, as stated in \cref{eqn:nonlocal_HJ_solution}, a classical solution to
\begin{align}\label{eqn:nonlocal_HJ_solution_2nd}
   \partial_1 \tQk(t,x) + \partial_2 \tQk(t,x) V\big(\Wmk[\partial_2 \tQk](t,x),\Wpk[\partial_2 \tQk](t,x)\big) = 0 \ \text{ on } (0,T)\times\R.
\end{align}
Because \((t_0,x_0)\) is a strict local maximum of $\tQs - \tilde \Phi$ and $\tQk$ converges uniformly to $\tQs$ on compact sets, there exists for sufficiently large $K\in \N$ and sufficiently small $R \in \R$, a sequence $(t_k,x_k)_{k\in \N_{> K}} \subset B_R((t_0,x_0))$ for which $(t_k,x_k) \rightarrow (t_0,x_0)$ and $\tQk  - \tilde \Phi$ is locally maximal in $(t_k,x_k) \ \forall k \in \N_{> K}$. (This follows from the analysis in \cref{item:convergence_optimizers}, i.e., ``convergence of optimizers under uniform convergence.'') Because of the regularity of $\tQk$ and $\tilde \Phi$, by applying first-order optimality conditions, \(\forall k\in\N_{>K}\), we obtain that
\begin{align}
   \partial_1 \tQk(t_k,x_k) = \partial_1 \tilde \Phi(t_k,x_k) \qquad \wedge \qquad \partial_2 \tQk(t_k,x_k) = \partial_2 \tilde \Phi(t_k,x_k).
   \label{eq:partial_1_tQK_blah}
\end{align}
Inserting \cref{eq:partial_1_tQK_blah} in \cref{eqn:nonlocal_HJ_solution_2nd} at the point \((t_{k},x_{k})\) yields
\begin{align}
    \partial_1 \tilde \Phi(t_k,x_k) + \partial_2 \tilde \Phi(t_k,x_k) V\big(\Wmk[\partial_2\tQk](t_k,x_k),\Wpk[\partial_2 \tQk](t_k,x_k)\big) = 0.
\end{align}
Recalling that we can approximate
\(\gamma\in \mBV(\R_{<0})\) by a sequence of smooth functions \(\gamma_{\eps}\in \mC^{\infty}(\R_{<0};\R_{\geq 0})\cap\mW^{1,1}(\R_{<0};\R_{\geq 0})\) so that \(\gamma_{\eps}'\geqq 0 \text{ on } \R_{<0}\) and $\forall x\in \R_{<0}$,
\[
\lim_{\eps\rightarrow 0}\|\gamma_{\eps}-\gamma\|_{\mL^{1}((-\infty,x))}=0\ \wedge\ \lim_{\eps\rightarrow 0} \|\gamma_{\eps}\|_{\mL^{\infty}((-\infty,x))}= \|\gamma\|_{\mL^{\infty}((-\infty,x))}.
\]
We omit the details of this smoothing argument and assume for $\gammapk$, in addition to \cref{ass:general}, that $\gammapk\in\mC^{\infty}(\R_{<0};\R_{\geq 0})\cap\mW^{1,1}(\R_{<0};\R_{\geq 0})$ for the intermediate steps involving $(\gammapk)'$.

We then find, for the nonlocal impacts, that
\begin{align}
    \Wpk[\partial_2 \tQk](t_k,x_k)&\coloneqq \int_{x_k}^\infty \gammapk(x_k-y)\partial_2 \tQk (t_k,y) \dd y \\
    &= \int_{x_k}^\infty (\gammapk)'(x_k-y)\big(\tQk (t_k,y)-\tQk (t_k,x_k)\big) \dd y.
\end{align}
Because $\tQk (t_k,x_k) -\tilde \Phi(t_k,x_k) \geq \tQk (t,y) -\tilde \Phi(t,y)$, for all $r\in (0,R)$ and for all $(t,y) \in B_{R-r}(t_k,x_k) \subset B_R(t_0,x_0)$, we obtain the following:
\begin{align}
    \tilde \Phi(t,y) -\tilde \Phi(t_k,x_k) \geq \tQk (t,y) -\tQk (t_k,x_k).
\end{align}
Recalling that $(\gammapk)' \geq 0$, we find that
\begin{align}
    &\Wpk[\partial_2 \tQk ](t_k,x_k)\\
&\leq \int_{x_k}^{x_k+R-r} (\gammapk)'(x_k-y)\big(\tilde \Phi(t_k,y) -\tilde \Phi(t_k,x_k)\big) \dd y \\
    &\quad + \int_{x_k+R-r}^\infty (\gammapk)'(x_k-y)\Big(\tQk (t_k,y)-\tQk (t_k,x_k)\Big) \dd y \\
&\leq  \gammapk((r-R)^-)\big(\tilde \Phi(t_k,x_k+R-r) - \tilde \Phi(t_k,x_k) \big) + \int_{x_k}^{x_k+R-r} \!\!\!\!\!\!\!\!\!\!\!\!\!\!\gammapk(x_k-y)\partial_y \tilde \Phi(t_k,y) \dd y \\
    &\quad + \gammapk((r-R)^-)(R-r)\|\qn \|_{\mL^\infty(\R)}  + 2\|\gammapk\|_{\mL^ 1((-\infty,r-R))}\|\qn \|_{\mL^\infty(\R)}.
\end{align}
Note that because of the kernel monotonicity, we have $\|\gammapk\|_{\mL^\infty((-\infty,x))} = \gammapk(x^-) \ \forall x \in \R_{<0}$, so the approximation argument is still valid. Because of \cref{ass:general} and \cref{lem:kernel_vanishing_first_moments}, all terms except for the integral vanish for $k \rightarrow \infty$. We investigate the remaining term further and show that the integral approximates $\partial_2 \tilde \Phi(t_0,x_0)$:
\begin{align*}
    \bigg|\partial_2 \tilde \Phi(t_k,x_k) - \int_{x_k}^{x_k+R-r} \!\!\!\!\!\!\!\!\!\!\!\!\!\!\!\!\!\!\gammapk(x_k-y)\partial_y \tilde \Phi(t_k,y) \dd y \bigg| &=  \bigg|\int_{x_k}^{x_k+R-r} \!\!\!\!\!\!\!\!\!\!\!\!\!\!\!\!\gammapk(x_k-y) \int_{x_k}^y \!\!\! \partial_2^2 \tilde \Phi(t_k,s) \dd s \bigg)\dd y\bigg|  \\
    &\leq \|\partial_2^2 \tilde \Phi\|_{\mL^\infty((0,T)\times \R)} \int_{0}^{R-r} y\gammapk(-y)  \dd y.
\end{align*}
Because of the first moment property of the kernel in \cref{lem:kernel_vanishing_first_moments}, the latter integral vanishes for $k\rightarrow \infty$.

Altogether,  there exists a null sequence $(\sCkp)_{k\in \N}\subset \R$ such that
\begin{align}\label{eqn:est_Wpk}
    \Wpk[\partial_2 \tQk (t_k,\cdot)](x_k) \leq \partial_2 \tilde \Phi(t_k,x_k) + \sCkp.
\end{align}
Analogously for $\Wmk$, there exists a null sequence $(\sCkm)_{k\in \N}\subset \R$ such that
\begin{align}\label{eqn:est_Wmk}
    \Wmk[\partial_2 \tQk (t_k,\cdot)](x_k) \geq \partial_2 \tilde \Phi(t_k,x_k) + \sCkm.
\end{align}
Using $\partial_1 V \geqq 0$ and  $\partial_2 V \leqq 0$, we obtain
\begin{align}
    0 &= \partial_1 \tilde \Phi(t_k,x_k) + \partial_2 \tilde \Phi(t_k,x_k) V\big(\Wm[\partial_2\tQk (t_k,\cdot)](x_k),\Wp[\partial_2 \tQk (t_k,\cdot)](x_k)\big), \\
    \intertext{which can be estimated using \cref{eqn:est_Wpk} and \cref{eqn:est_Wmk}. From \cref{eq:partial_1_tQK_blah}, we obtain $\partial_2 \tilde \Phi(t_k,x_k) = \partial_2 \tQk(t_k,x_k)$ using \cref{eqn:Qx_q_equivalence} and the nonnegativity of the initial data (\cref{ass:general}). This results in $ \partial_2 \tilde \Phi(t_k,x_k) \geq 0$, and thus}
    0 &\geq \partial_1 \tilde \Phi(t_k,x_k) + \partial_2 \tilde \Phi(t_k,x_k) V\big(\partial_2\tilde \Phi(t_k,x_k) + \sCkm,\partial_2\tilde \Phi(t_k,x_k) + \sCkp\big).
\end{align}
Because $\tilde \Phi \in \mC^\infty((0,T)\times \R)$, $\sCkp \rightarrow 0$, $\sCkm \rightarrow 0$, and $(t_k,x_k) \rightarrow (t_0,x_0)$, we end up in the subsolution condition, that is,
\begin{align}
    \partial_1 \tilde \Phi(t_0,x_0) + \partial_2 \tilde \Phi(t_0,x_0) V\big(\partial_2\tilde \Phi(t_0,x_0),\partial_2\tilde \Phi(t_0,x_0)\big) \leq 0.
\end{align}

Because the first derivatives of $\tilde{\Phi}$ and $\Phi$ coincide at $(t_0,x_0) \in ((0,T)\times \R)$---see the discussion between \cref{eqn:strict_local_maximum_test_function} and \cref{eqn:loc_max}---we have thus demonstrated that \cref{eqn:sub_solution_prove} also holds. From this, we have that $ \tQs$ is a subsolution. In the same way, we can prove that \(\tQs\) is a supersolution, which implies that $\tQs$ is the unique viscosity solution of \cref{eq:H_J} (for the definition, see  \cref{defi:HJ_viscosity}). 
\end{enumerate}
We can now deduce from this uniform convergence of $\Qk$ to $\Qs$ on compact sets the weak-star convergence of the respective nonlocal solutions $(\qk)_{k\in \N}$.
\begin{enumerate}[label=(\alph*),resume]
    \item \textit{Weak-star convergence of nonlocal solutions and nonlocal terms:}
We know from \cref{theo:rel_HJ_cl} that
\[
\partial_{2}\tQs\in\mL^{\infty}((0,T)\times\R)
\]
is the (unique) entropy solution of \cref{eq:local_c_l}. Because \(\partial_{2}\tQs\equiv \partial_{2}\Qs\) on \((0,T)\times\R\), we can see, for test functions \(\phi\in \mC^{1}_{\text{c}}((0,T)\times \R)\)
and the sequence \(\qk\) as in \cref{eq:weak_star:convergence}, that
\begin{align}
    \iint \qk \phi \dd x \dd t = - \iint \Qk \phi_x \dd x  \dd t\ \xrightarrow{k\rightarrow \infty} - \iint \Qs \phi_x \dd x   \dd t= \iint \partial_2 \Qs \phi \dd x  \dd t.
\end{align}
Since $\mC^{1}_c((0,T)\times \R)$ is dense in $\mL^1((0,T)\times \R)$, we therefore obtain
\[
\forall g\in\mL^{1}((0,T) \times \R):\ \lim_{k\rightarrow\infty} \iint\big(\qk(t,\cdot)-\partial_{2}\Qs(t,\cdot)\big) g(t,x)\dd x \dd t =0,
\]
which is the proposed weak-star convergence.
So far, our analysis has been based on a selection criterion regarding subsequences. However, because we can repeat the same steps for any other subsequence (for which a subsequence exists that converges to the unique entropy solution), the claimed convergence holds for any sequence. 
The claimed convergence of the nonlocal terms \(\Wmk,\Wpk\) to the local entropy solutions follows from their definition, the weak convergence of \(\qk\), the continuity of \(\mL^{1}\) functions with respect to shifts in the argument, and the ``vanishing tail property'' of the nonlocal kernels as in \cref{ass:general}.
\end{enumerate}
\end{proof}

\begin{rem}[applications of the convergence result to one-sided kernels and velocity functions with sign-restricted derivatives]
   Note that \cref{theo:singular_limit_convergence} contains, as a byproduct, singular-limit results for what are mainly known in the literature as traffic flow models. Indeed, if we choose \(V\) so that \(\partial_{1}V\equiv 0\), we are in the situation of the nonlocal dynamics
   \begin{equation}
\partial_{t}q +\partial_{x}\big(\hat{V}(\Wp[q](t,x))q\big)=0\label{eq:nonlocal_one-sided}
   \end{equation}
   with \(\hat{V}\in \mW^{1,\infty}_{\loc}(\R): \hat{V}'\leqq 0\), and we find that \(q\) converges weak-star in \(\mL^{\infty}((0,T)\times\R)\) to the local entropy solution. Thus, we do not require the typical \(\mTV\)-bounds on the initial datum and compactness methods in \(\mL^{1}\) that are usually considered in \cite{coclite2022general,friedrich2024conservation,colombo2023nonlocal} and related results. The advantage of these results is, however, that one also obtains strong convergence in \(\mL^{1}\) to the local entropy solution, at least for the nonlocal operator \(\Wp[q]\). 
Only in recent results could the required \(\mTV\) bounds for the initial datum be dispensed by using compensated compactness \cite{coclite2025singular}; however, this approach is still restricted to the model class in which the velocity's derivative has a sign, as presented in \cref{eq:nonlocal_one-sided}, and it is more restrictive on the nonlocal kernels.

\end{rem}

\subsection{Nonlocal Approximation Result for Sign-Unrestricted Data}\label{subsec:sign_restricted}
In the previous results, we have demonstrated that classical nonlocal approximations looking backward and forward suffice to approximate entropy solutions to local conservation laws. However, on the nonlocal level, it was necessary to assume that the initial datum has a sign to obtain uniform \(\mL^{\infty}\)-bounds on the solution, enabling it to pass to the limit in the nonlocal kernel and the solution. 

By shifting the initial datum by a constant (typically the essential infimum), we can generalize our nonlocal approximation result to sign-unrestricted data with the drawback that a physical interpretation as in \cref{eq:nonlocal_c_l} is not obvious. This is detailed in the following theorem.

\begin{theo}[nonlocal approximation of scalar nonlinear conservation laws with sign-unrestricted data]\label{theo:sign_unrestricted_initial_datum}
Let \(\qn \in\mL^{\infty}(\R)\) and define
\(\R\ni \um\leq \essinf_{x\in\R}\qn (x)\). Further, let \(f \in \mW^{1,\infty}_\loc(\R)\) satisfy $s \mapsto \nicefrac{(f(s +\um) - f(\um))}{s} \ \forall s \in \R_{\neq 0}$ possess a locally Lipschitz extension on $\R$. Then, the local entropy solution \(q\in\mL^{\infty}((0,T)\times\R)\) of \cref{eq:local_c_l} can be approximated weak-star in \(\mL^{\infty}((0,T)\times\R)\) by the (unique) weak solution \(\qk\in\mC([0,T];\mL^{1}(\R))\cap \mL^{\infty}((0,T)\times\R)\) of the following nonlocal conservation law:
\begin{equation}
\begin{aligned}
    \partial_t q + \partial_x \big((q-\um) V(\Wmk[q-\um],\Wpk[q-\um]) \big) &= 0,&&(t,x)\in (0,T)\times\R,\\
    q(0,\cdot) &\equiv \qn, && (t,x)\in (0,T)\times\R,
\end{aligned}
\label{eq:nonlocal_sign_unrestricted}
\end{equation}
with \(V\) as in \cref{ass:general}, \(\tilde V(s) \coloneqq \nicefrac{(f(s +\um) - f(\um))}{s}\ \forall s\in \R\), and \(\Wmk\) and \(\Wpk\) as in \cref{notation}.
\end{theo}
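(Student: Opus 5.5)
The plan is to reduce the statement to \cref{theo:singular_limit_convergence} by shifting. Set \(p\coloneqq q-\um\) and \(p_{\tn}\coloneqq \qn-\um\). Then \(p_{\tn}\in\mL^{\infty}(\R;\R_{\geq 0})\) because \(\um\leq\essinf\qn\). Define the shifted flux \(g(s)\coloneqq f(s+\um)-f(\um)\), so that \(g\in\mW^{1,\infty}_{\loc}(\R)\), \(g(0)=0\), and \(g(s)/s=\tilde V(s)\) has a locally Lipschitz extension by hypothesis. Hence \(g\) satisfies item 2 of \cref{ass:general}. Construct \(V\) from \(\tilde V\) through the Engquist--Osher splitting \cref{eq:nonlocal_velocity_natural}. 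Then \(\partial_1V\geqq0\), \(\partial_2V\leqq0\) and \(sV(s,s)=s\tilde V(s)=g(s)\), so item 3 holds for the pair \((g,V)\). The kernels are those of \cref{ass:general}.

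Next I check that the shift commutes with both equations. On the nonlocal side, \(q\) solves \cref{eq:nonlocal_sign_unrestricted} iff \(p=q-\um\) solves \cref{eq:nonlocal_c_l} with datum \(p_{\tn}\) and velocity \(V\). The reason is \(\partial_t p=\partial_t q\), and the flux \((q-\um)V(\Wmk[q-\um],\Wpk[q-\um])\) is exactly \(pV(\Wmk[p],\Wpk[p])\). \Cref{theo:existence_uniqueness_nonlocal} therefore gives existence and uniqueness of \(\qk=p^{\tk}+\um\), and with it the regularity \(\mC([0,T];\mL^1_{\loc})\cap\mL^\infty\). The statement writes \(\mL^1\); I expect \(\mL^1_{\loc}\) is meant, unless \(\qn-\um\in\mL^1\).

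On the local side, a bounded \(q\) is the Kruzhkov entropy solution for flux \(f\) and datum \(\qn\) iff \(p=q-\um\) is the entropy solution for flux \(g\) and datum \(p_{\tn}\). To see this, note that the weak formulation changes only by the constants \(f(\um)\) and \(\um\), which integrate to zero against \(\partial_x\phi\) and \(\partial_t\phi\) up to the initial term. Moreover \(|q-c|=|p-(c-\um)|\) and \(\operatorname{sgn}(q-c)(f(q)-f(c))=\operatorname{sgn}(p-\tilde c)(g(p)-g(\tilde c))\) with \(\tilde c=c-\um\). So the family of Kruzhkov entropy inequalities is mapped bijectively onto itself, and \cref{thm:ex_uni_c_l} identifies the two solutions.

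Applying \cref{theo:singular_limit_convergence} to \((g,V,p_{\tn})\) gives \(p^{\tk}\xrightharpoonup{*}p\) in \(\mL^\infty((0,T)\times\R)\). Adding the constant \(\um\) preserves weak-star convergence, because \(\um\) is independent of \(k\) and lies in \(\mL^\infty\). This yields \(\qk\xrightharpoonup{*}q\).

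I expect the only delicate point to be the identification of the entropy solutions under the shift, together with making sure the Lipschitz constant \(L\) and the \(\mL^\infty\) bounds \([\qmin-\um,\qmax-\um]\) used in the earlier theorems transfer. Both are routine once the Kruzhkov constants are shifted as above. Everything else is direct substitution.
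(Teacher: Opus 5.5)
Your proposal is correct and follows the paper's proof. Both shift by \(\um\) to obtain a nonnegative datum and the flux \(s\tilde V(s)=f(s+\um)-f(\um)\), apply \cref{theo:singular_limit_convergence} to the shifted nonlocal problem, and add \(\um\) back. You also check explicitly that the Kruzhkov entropy inequalities correspond under the shift, which the paper leaves implicit, and you correctly note that the regularity should read \(\mC([0,T];\mL^1_{\loc}(\R))\) rather than \(\mL^1(\R)\).
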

\begin{proof}
    We only sketch the proof and consider the dynamics classically, without loss of generality, as the main argument consists of applying \cref{theo:singular_limit_convergence} on a properly reformulated (non)local conservation law.
Starting with the local conservation law \cref{eq:local_c_l} supplemented by an appropriate initial datum
    \begin{align*}
           \rho_t + (f(\rho))_x &= 0, \qquad \rho(0,\cdot)\equiv\qn,  && (t,x)\in (0,T)\times\R,
\intertext{we substitute \(\orho\equiv \rho-\um \) and because \(m\equiv \text{const.},\) \(\orho\) satisfies}
        \partial_{t}\orho + \partial_{x}\big(f(\orho+\um) -f(\um)\big)&=0,\qquad \orho(0,\cdot)\equiv\qn (x)-\um\geqq 0, &&(t,x)\in (0,T)\times\R,
\intertext{which, by definition of $\tilde V$ is equivalent to}
        \partial_{t} \orho+ \partial_{x}\big(\orho\,\tilde V(\orho)\big)&=0, \qquad \orho(0,\cdot)\equiv\qn (x)-\um\geqq 0, &&(t,x)\in (0,T)\times\R.
\intertext{    This local conservation law is then approximated nonlocally by the unique weak solution (as guaranteed in \cref{theo:existence_uniqueness_nonlocal}) \((\oqk)_{k\in\N}\) of}
        \partial_{t}\oq + \partial_{x}\big(\oq \,V(\Wmk[\oq],\Wpk[\oq])\big)&=0, \qquad \oq(0,\cdot)\equiv\rho_{\tn}(x)-\um\geqq 0, &&(t,x)\in (0,T)\times\R,
    \end{align*}
which, after the substitution \(q \equiv \oq+\um\), results in the nonlocal conservation law stated in \cref{eq:nonlocal_sign_unrestricted}. Because we know from \cref{theo:singular_limit_convergence} that \(\overline q_{\tk}\) converges to local entropy solutions for \(\oq(0,\cdot)=\qn -\um\) thanks to this initial datum being nonnegative, we also obtain the convergence of \(\qk\) with initial datum \(\qn \) as stated in \cref{eq:nonlocal_sign_unrestricted}.
\end{proof}

\section{Rate of Convergence of the Nonlocal Approximation}\label{sec:convergence_order}
For simplicity, we restrict ourselves in this section to convex fluxes $f$. By analogous arguments, we can obtain the same results for concave fluxes.
We recall the following classical sup-/inf-convolution regularization, a standard
tool in the theory of viscosity solutions and in the study of
semiconvex/semiconcave approximations; see, e.g.,
\cite[Sec.~3.5]{CannarsaSinestrari2004}.
\begin{defi}[inf-/sup-convolutions and smooth approximations]\label{defi:inf_sup_convolutions} For $Q \in \mW^{1,\infty}_\loc((0,T)\times \R)$ with $0 \leqq \partial_2 Q \leqq \qmax$, we define, for \(\eps\in\R_{>0}\) and \((t,x)\in (0,T)\times\R\),
\begin{align}
 \Qepsp(t,x) \coloneqq \sup_{y\in\R} \Big\{Q(t,y) - \tfrac{(x-y)^2}{2\eps}\Big\} , \qquad \Qepsm(t,x) \coloneqq \inf_{y\in\R} \Big\{Q(t,y) + \tfrac{(x-y)^2}{2\eps}\Big\},\label{eq:Q_pm_eps}
\end{align}
and for the standard mollifier $\phi_\teps$, we define ``spatially'' smooth approximations thereof, that is, for \(t\in[0,T]\) and \(\eps\in\R_{>0}\),
\begin{align}
    \Ueps(t,\cdot) \coloneqq \Qepsp(t,\cdot) * \phi_\teps, \qquad \Leps(t,\cdot) \coloneqq \Qepsm(t,\cdot) * \phi_\teps.\label{defi:U_eps}
\end{align}
\end{defi}

The well-definedness of the suprema follows from the Lipschitz continuity of $Q$
\cite[Lem.~3.5.2]{CannarsaSinestrari2004}.
For a result on approximating the solution to the \HJ equation, we establish the following uniform convergence property and bound on the nonlocal operators applied to the smooth approximations.

\begin{lem}[properties of the smooth approximations of $Q$]\label{lem:smooth_approx}
Given \cref{defi:inf_sup_convolutions}, we obtain the following approximation for \(\eps\in\R_{>0}\):
\begin{equation}
    \|\Ueps - Q\|_{\mL^\infty((0,T) \times \R)} \leq \sCn\eps, \  \|Q-\Leps\|_{\mL^\infty((0,T) \times \R)} \leq \sCn\eps, \label{eq:U_eps_Q_approximation}
\end{equation}
with $\sCn = \qmax+\nicefrac{\qmax^2}{2}$. We also have the following approximations for the application of the nonlocal operators \(\Wmk,\Wpk\) as in \cref{notation}, for \((t,x)\in[0,T]\times\R\) and $\eps \in (0,\nicefrac{1}{q_{\tmax}})$:
\begin{equation}
\begin{aligned}
\Wpk[\partial_{2} \Ueps](t,x)\ge \partial_x \Ueps(t,x)-\tfrac{\mpk}{\eps},
\qquad
\Wmk[\partial_{2} \Ueps](t,x)\le \partial_x \Ueps(t,x)+\tfrac{\mmk}{\eps},\\
    \Wpk[\partial_{2} \Leps](t,x)\le \partial_x \Leps(t,x)+\tfrac{\mpk}{\eps},
\qquad
 \Wmk[\partial_{2} \Leps](t,x)\ge \partial_x \Leps(t,x)-\tfrac{\mmk}{\eps}.
\end{aligned}
\label{eq:Wmpk_partial_2_U_eps}
\end{equation}
with the ``truncated'' first moments 
\begin{align}\label{eqn:truncated_first_moments}
    \mmk \coloneqq \int_0^\infty \min\{1,x\} \gammamk(x) \dd x, \quad \text{and} \quad \mpk \coloneqq \int_0^\infty \min\{1,x\} \gammapk(-x) \dd x.
\end{align}
\end{lem}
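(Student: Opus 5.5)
The plan is to prove the two groups of estimates in \cref{eq:U_eps_Q_approximation} and \cref{eq:Wmpk_partial_2_U_eps} separately, each pointwise in \(t\), since \cref{defi:inf_sup_convolutions} only acts in space. I give the argument for \(\Qepsp\) and \(\Ueps\); the statements for \(\Qepsm\) and \(\Leps\) follow by the symmetric argument, or by applying it to \(-Q(t,-\cdot)\). Throughout I use that, for each fixed \(t\), \(Q(t,\cdot)\) is nondecreasing and Lipschitz with constant \(\qmax\), because \(0\leqq\partial_2Q\leqq\qmax\).

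\textbf{Step 1: closeness to \(Q\).} Taking \(y=x\) in \cref{eq:Q_pm_eps} gives \(\Qepsp\ge Q\). Writing \(y=x+z\) and using the Lipschitz bound,
\[
\Qepsp(t,x)-Q(t,x)\le\sup_{z\in\R}\Big\{\qmax|z|-\tfrac{z^2}{2\eps}\Big\}=\tfrac{\qmax^2\eps}{2}.
\]
Next I check that \(\Qepsp(t,\cdot)\) is again nondecreasing and \(\qmax\)-Lipschitz. This holds because it is a supremum over \(z\) of the translates \(x\mapsto Q(t,x+z)-z^2/(2\eps)\), each of which has both properties. Since \(\phi_\teps\) is supported in \((-\eps,\eps)\) and has unit mass, mollification moves a \(\qmax\)-Lipschitz function by at most \(\qmax\eps\). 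The triangle inequality then gives \(\|\Ueps-Q\|_{\mL^\infty}\le(\qmax+\nicefrac{\qmax^2}{2})\eps=\sCn\eps\).

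\textbf{Step 2: semiconvexity.} Expanding the square, \(\Qepsp(t,x)+\tfrac{x^2}{2\eps}=\sup_y\{Q(t,y)-\tfrac{y^2}{2\eps}+\tfrac{xy}{\eps}\}\) is a supremum of affine functions of \(x\), hence convex. So \(\partial_x^2\Qepsp\ge-\nicefrac1\eps\) in \(\mD'\), and the same bound holds for the smooth function \(\Ueps(t,\cdot)\), since convolution with a nonnegative kernel preserves it. By Step 1 we also have \(0\le\partial_x\Ueps\le\qmax\). For \(y\ge x\) this yields two lower bounds, which I combine into one:
\[
\partial_y\Ueps(t,y)\ \ge\ \partial_x\Ueps(t,x)-\min\Big\{\tfrac{y-x}{\eps},\,\qmax\Big\}\ \ge\ \partial_x\Ueps(t,x)-\tfrac{\min\{1,\,y-x\}}{\eps}.
\]
The first bound comes from semiconvexity, the second from \(\partial_y\Ueps\ge0\ge\partial_x\Ueps-\qmax\). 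The last inequality uses \(\qmax\le\nicefrac1\eps\), which is exactly the restriction \(\eps<\nicefrac{1}{\qmax}\). Likewise, for \(y\le x\) we get \(\partial_y\Ueps(t,y)\le\partial_x\Ueps(t,x)+\nicefrac{\min\{1,x-y\}}{\eps}\).

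\textbf{Step 3: integrate against the kernels.} Multiply the first inequality by \(\gammapk(x-y)\ge0\), integrate over \(y\in(x,\infty)\), and use \(\|\gammapk\|_{\mL^1(\R_{<0})}=1\) with the substitution \(u=y-x\). This gives \(\Wpk[\partial_2\Ueps](t,x)\ge\partial_x\Ueps(t,x)-\nicefrac{\mpk}{\eps}\), with \(\mpk\) the truncated first moment in \cref{eqn:truncated_first_moments}. The analogous computation with \(\gammamk\) on \((-\infty,x)\) gives the bound for \(\Wmk\). For \(\Leps\), the function \(\Qepsm\) is semiconcave, \(\partial_x^2\Qepsm\le\nicefrac1\eps\), and all inequalities flip, which yields the second line of \cref{eq:Wmpk_partial_2_U_eps}.

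I expect the only delicate point to be Step 2. There I must confirm that the sup-convolution preserves monotonicity and the Lipschitz bound \(\qmax\), and that the distributional semiconvexity bound passes to the mollified function. The key observation is to combine the crude bound \(\qmax\) with the linear bound \(\nicefrac{(y-x)}{\eps}\) so that only the truncated moment appears, which is what the condition \(\eps<\nicefrac1\qmax\) is for.
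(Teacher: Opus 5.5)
Your proposal is correct and follows essentially the same route as the paper. For the first estimate you use the bound $Q\le\Qepsp\le Q+\tfrac{\qmax^2\eps}{2}$ together with the $\qmax\eps$ mollification error. For the second you use semiconvexity of $\Qepsp$ from the supremum-of-affine-functions representation, truncate the resulting one-sided derivative bounds with $0\le\partial_x\Ueps\le\qmax$ and $\eps<1/\qmax$, and integrate against the normalized kernels. You also spell out two steps the paper only asserts: that the sup-convolution inherits monotonicity and the Lipschitz constant (as a supremum of translates), and how the two lower bounds combine via a minimum.
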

\begin{proof}
We first show the approximation result in \cref{eq:U_eps_Q_approximation} and, without loss of generality, we focus only on the result on the left concerning \(\Ueps -Q\); the result on the right follows analogously.

    The sup-/inf-convolutions maintain the monotonicity and Lipschitz continuity of $Q$ induced by the boundedness and nonnegativity of $\qn $ (\cref{ass:general}).
    For \((t,x,y)\in[0,T]\times\R^{2}\) and \(\eps\in\R_{>0}\), it holds that
    \begin{equation}
    \begin{aligned}
        Q(t,y) - \tfrac{(x-y)^2}{2\eps} &=Q(t,x)+\int_{x}^{y}\partial_{2}Q(t,z)\dd z\\
        &\leq  Q(t,x) + \qmax |x-y| - \tfrac{(x-y)^2}{2\eps} \leq Q(t,x) + \tfrac{\qmax^2 \eps}{2},
    \end{aligned}
    \label{eq:lower_upper_bound_Q_eps_plus}
    \end{equation}
    where the first inequality holds because of the $\qmax$-Lipschitz continuity of \(Q\), and the second was obtained by maximizing the concave function $\R\ni s \mapsto \qmax s - \nicefrac{s^2}{2\eps}$ over $\R_{\geq 0}$,  leading to the maxima $s^* = \pm \qmax\eps$ and thus the given maximal value \(\nicefrac{\qmax^{2}\eps}{2}\). 
    By construction, $\Qepsp$ is bounded from below by $Q$, and by taking the supremum with respect to \(y\in\R\) of the left-hand side in the latter inequality, \cref{eq:lower_upper_bound_Q_eps_plus} leads, for \((t,x)\in [0,T]\times\R\), to 
    \begin{align}
        Q(t,x) \leq \Qepsp(t,x) \leq Q(t,x) + \tfrac{\qmax^2 \eps}{2}.\label{eq:Q_Q_+_eps}
    \end{align}
Now, we add a zero and write, for \((t,x)\in[0,T]\times\R\),
\begin{align}
    \Ueps (t,x)-Q(t,x)&=\Ueps (t,x)-\Qepsp (t,x)+\Qepsp (t,x)-Q(t,x)\label{eq:U_eps_Q}\\
    &\leq \|\Ueps -\Qepsp \|_{\mL^{\infty}((0,T)\times\R)}+\|\Qepsp -Q\|_{\mL^{\infty}((0,T)\times\R)}
    \intertext{and thanks to the $\qmax$-Lipschitz continuity of $\Qepsp$ with respect to the second component, the convolution with the standard mollifier satisfies
    $\|\Ueps  - \Qepsp\|_{\mL^\infty((0,T) \times \R)} \leq \qmax\eps$, which, together with \cref{eq:Q_Q_+_eps} leads to}
    \Ueps (t,x)-Q(t,x)&\leq \qmax\eps+\tfrac{\qmax^{2}\eps}{2}.
\end{align}
    Because the right-hand side is independent of \((t,x)\in[0,T]\times\R\), we obtain \cref{eq:U_eps_Q_approximation} from ``above.'' From below, the estimate is almost identical. The derivations for $\Leps$ are carried out in a similar way, concluding the proof of the estimate in \cref{eq:U_eps_Q_approximation}.

It remains to show \cref{eq:Wmpk_partial_2_U_eps}.
As 
    $\R\ni x \mapsto \Qepsp(t,x) + \nicefrac{x^2}{2\eps}= \sup_{y\in\R} \big\{Q(t,y) - \nicefrac{(y^2 - 2xy)}{2\eps}\big\} $
    is a supremum over affine functions, it is convex \cite[Thm.~5.5.]{Rockafellar1970}, so we obtain 
    $\partial_x^2 \Big(\Qepsp(t,x) + \nicefrac{x^2}{2\eps}\Big) \geq 0$ in a distributional sense. In particular, it holds for all $(t,x) \in (0,T)\times \R$ after applying any standard mollifier \(\phi_{\eps}\in\mC^{\infty}(\R;\R_{\geq0})\). Integrating in space yields
    \begin{align*}
        \forall x\in\R:\ \int_{\R}\big(\Qepsp (t,y)+\tfrac{y^{2}}{2\eps}\big)\phi_{\eps}''(x-y)\dd y&\geq 0,
        \intertext{which, after performing integration by parts twice in the latter term, becomes}
        \forall x\in\R:\ \int_\R \Qepsp(t,y)\phi''_\teps(x-y) + \tfrac{1}{\eps}\phi_\teps(x-y) \dd y &\geq 0,
        \intertext{and because  \(\|\phi_\teps\|_{\mL^1(\R)} = 1\) and derivatives commute with convolutions (see \cite[Prop.~4.20]{brezis}), this yields}
        \forall x\in\R:\ \partial_{x}^{2}\int_\R \Qepsp(t,y)\phi_\teps(x-y) \dd y &\geq -\tfrac{1}{\eps}.
    \end{align*}
    The left-hand side is the definition of \(\Ueps \) in \cref{defi:U_eps}, so we obtain \(
     \partial_{2}^{2}\Ueps (t,\cdot) \geqq -\tfrac{1}{\eps}.\)
    From this, it follows by the mean value theorem that, \(\forall x\in\R\),
     \begin{equation}
     \begin{aligned}
          \partial_y \Ueps(t,y) &\geq \partial_x \Ueps(t,x) - \tfrac{y-x}{\eps} &&\forall y \geq x  \\
        \partial_y \Ueps(t,y) &\leq \partial_x \Ueps(t,x) + \tfrac{x-y}{\eps} &&\forall y \leq x. 
             \end{aligned}\label{eqn:bounds_bla}
    \end{equation}
Now, $\partial_y \Ueps(t,y),\partial_x \Ueps(t,y) \in [0,\qmax] \ \forall (t,x) \in [0,T]\times \R$ because the monotonicity and  $\qmax$-Lipschitz continuity of $\Qepsp$ are preserved for $\Ueps$. Combining this with the assumption that $\nicefrac{1}{\eps} \geq {\qmax}$, we can then bound the terms in \cref{eqn:bounds_bla} further:
     \begin{align}
        \partial_y \Ueps(t,y) &\geq \partial_x \Ueps(t,x) - \tfrac{\min\{1,y-x\}}{\eps} &&\forall y \geq x \label{eq:partial_2_U_eps_lower_bound}\\
        \partial_y \Ueps(t,y) &\leq \partial_x \Ueps(t,x) + \tfrac{\min\{1,x-y\}}{\eps} &&\forall y \leq x. \label{eq:partial_2_U_eps_upper_bound}
    \end{align}
Thus, by applying the definition of \(\Wpk\) in \cref{notation}, we find that for \((t,x)\in [0,T]\times\R\),
\begin{align*}
    \Wpk[\partial_{2}\Ueps ](t,x)&=\int_{x}^{\infty} \gammapk(x-y)\partial_{2}\Ueps (t,y)\dd y,
    \intertext{and because \(\gammapk\geqq 0\) thanks to \cref{ass:general} and \cref{eq:partial_2_U_eps_lower_bound} as \(y\geq x\),}
    \Wpk[\partial_{2}\Ueps ](t,x)&\geq \int_{x}^{\infty} \gammapk(x-y)\partial_{2}\Ueps (t,x)\dd y-\int_{x}^{\infty} \gammapk(x-y)\tfrac{\min\{1,y-x\}}{\eps}\dd y.
    \intertext{Using \cref{ass:general} and \cref{eqn:truncated_first_moments} leads to}
    \Wpk[\partial_{2}\Ueps ](t,x)&\geq \partial_{2}\Ueps (t,x)-\tfrac{\mpk}{\eps}.
\end{align*}
    This is indeed the first part of \cref{eq:Wmpk_partial_2_U_eps}; the upper bound on \(\Wmk[\partial_{2}\Ueps ]\) can be derived analogously by taking advantage of \cref{eq:partial_2_U_eps_upper_bound}. The    
    remaining inequalities in \cref{eq:Wmpk_partial_2_U_eps} for $\partial_{2}\Leps$ can also be obtained analogously.
\end{proof}

The final ingredient needed for the proof of the rate-of-convergence theorem is
the following lemma. It shows that the smooth approximations \(\Ueps\) and
\(\Leps\) are, respectively, sub- and supersolutions of the local
\HJ equation, and it quantifies their defect with respect to the
nonlocal \HJ equation.

\begin{lem}[sub-/supersolution property for $\Ueps$ and $\Leps$] \label{lem:error_nonlocal_H_J}
The mappings $\Ueps, \Leps:[0,T]\times\R\rightarrow\R$ given in \cref{defi:U_eps} are sub-/supersolutions of the \HJ equation \cref{eq:H_J} with initial data $Q^\eps_+(0,\cdot)* \phi_\teps$ and $Q^\eps_-(0,\cdot)* \phi_\teps$, respectively, in the sense of \cref{defi:HJ_viscosity}. Furthermore, the following inequalities hold a.e.\ in \(t\in(0,T)\) and \(x\in\R\)
\begin{align}
    \partial_1 \Ueps(t,x) + V\big(\Wmk[\partial_2 \Ueps](t,x),\Wpk[\partial_2 \Ueps](t,x)\big) \partial_2 \Ueps(t,x) &\leq \sCe\tfrac{\mmk+\mpk}{\eps},\\
        \partial_1 \Leps(t,x) +  V\big(\Wmk[\partial_2 \Leps](t,x),\Wpk[\partial_2 \Leps](t,x)\big)\partial_2 \Leps(t,x) &\geq -\sCe\tfrac{\mmk+\mpk}{\eps},
\end{align}
with $\sCe = \qmax\|\nabla V\|_{\mL^\infty((\qmin-1,\qmax+1)^2)}$ and $\mmk,\mpk$ as in \cref{eqn:truncated_first_moments}.
\end{lem}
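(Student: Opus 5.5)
The plan has two parts. First, show that $\Ueps$ is a subsolution and $\Leps$ a supersolution of the local \HJ equation \cref{eq:H_J}, with the relevant inequality holding pointwise a.e. Second, convert this local inequality into the nonlocal defect estimate using the one-sided bounds \cref{eq:Wmpk_partial_2_U_eps} of \cref{lem:smooth_approx} together with the monotonicity $\partial_1 V\geqq 0$, $\partial_2 V\leqq 0$. Throughout, $f$ is convex, $Q$ is the viscosity solution of \cref{eq:H_J}, and $0\leqq\partial_2 Q\leqq\qmax$.

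\textbf{Subsolution property of $\Ueps$.} Since the Hamiltonian does not depend on $x$, every spatial translate $(t,x)\mapsto Q(t,x-z)-\tfrac{z^2}{2\eps}$ is again a viscosity solution of $\partial_1 Q+f(\partial_2 Q)=0$; adding a constant does not matter. The map $\Qepsp$ is the supremum over $z$ of this family, and it is Lipschitz and locally uniformly attained, because the maximizers satisfy $|z|\le 2\qmax\eps$ by \cref{eq:lower_upper_bound_Q_eps_plus}. By the standard fact that a supremum of subsolutions is a subsolution, $\Qepsp$ is a Lipschitz viscosity subsolution, hence $\partial_1\Qepsp+f(\partial_2\Qepsp)\le 0$ a.e.

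Next, mollify in $x$. Since $\partial_1 \Ueps=(\partial_1\Qepsp)*\phi_\teps$, $\partial_2\Ueps=(\partial_2\Qepsp)*\phi_\teps$, and $f$ is convex, Jensen's inequality gives $f(\partial_2\Ueps)\le f(\partial_2\Qepsp)*\phi_\teps$. Therefore $\partial_1\Ueps+f(\partial_2\Ueps)\le 0$ a.e. For Lipschitz functions and a convex Hamiltonian, an a.e. subsolution is a viscosity subsolution, so $\Ueps$ is a viscosity subsolution with initial datum $\Qepsp(0,\cdot)*\phi_\teps$.

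\textbf{Supersolution property of $\Leps$ (main obstacle).} Here Jensen points the wrong way, so the argument above does not transfer. I would first try to exploit semiconcavity. $\Qepsm$ is semiconcave in $x$ with constant $1/\eps$, and the same holds for $\Leps$. For convex $f$, the viscosity solution $Q$ is itself semiconcave in $x$ for $t>0$ by the Lax--Hopf representation, so $\Qepsm$ is an infimum of translated solutions and is a supersolution. For a semiconcave function, the viscosity supersolution test reduces to differentiability points, where the equation holds a.e. The remaining task is to show that the mollification error in $f(\partial_2\Leps)$ versus $f(\partial_2\Qepsm)*\phi_\teps$ has the favorable sign. I would attempt this by writing $\Leps$ as an infimum over shifts $z$ of the mollified solutions $Q(\cdot,\cdot-z)*\phi_\teps$ and arguing at each fixed shift. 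If that fails, I would fall back on proving the a.e. inequality only up to an $\mathcal O(\eps)$ error, which would still suffice for the rate theorem.

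\textbf{Nonlocal defect.} At a.e. $(t,x)$, set $p\coloneqq\partial_2\Ueps(t,x)\in[0,\qmax]$. By \cref{eq:Wmpk_partial_2_U_eps}, $\Wmk[\partial_2\Ueps]\le p+\mmk/\eps$ and $\Wpk[\partial_2\Ueps]\ge p-\mpk/\eps$. Both nonlocal terms also lie in $[0,\qmax]$, since the kernels are nonnegative with unit mass, so these bounds may be truncated to stay inside $(\qmin-1,\qmax+1)$. Monotonicity of $V$ and the mean value theorem then give
\[
V(\Wmk,\Wpk)\le V(p,p)+\|\nabla V\|_{\mL^\infty((\qmin-1,\qmax+1)^2)}\tfrac{\mmk+\mpk}{\eps}.
\]
Multiplying by $p\in[0,\qmax]$ and using $pV(p,p)=f(p)$ from \cref{eq:flux_velocity_compatibility}, the local subsolution inequality $\partial_1\Ueps+f(p)\le 0$ yields the claimed bound with $\sCe=\qmax\|\nabla V\|$. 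The estimate for $\Leps$ follows symmetrically from the reversed inequalities in \cref{eq:Wmpk_partial_2_U_eps} and the supersolution property.
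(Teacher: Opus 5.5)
Your argument for $\Ueps$ is essentially the paper's. You get that $\Qepsp$ is a subsolution as a supremum of translates of $Q$, where the paper shifts the test function directly, which amounts to the same thing. Then Rademacher gives the a.e.\ inequality, and Jensen's inequality for convex $f$ transfers it to $\Ueps$. The defect bound follows from \cref{eq:Wmpk_partial_2_U_eps}, the monotonicity of $V$ and $pV(p,p)=f(p)$. Truncating $\Wmk,\Wpk$ to the range of $\partial_2\Ueps$, instead of taking $\mmk,\mpk\le\eps$, is a harmless variant.

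The genuine gap is the $\Leps$ half, which you leave open. None of your proposed routes can close it, because that half of the statement is false for strictly convex $f$.

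Take $f(s)=s^2/2$, with $\qn=2$ on $\R_{<0}$ and $\qn=1$ on $\R_{>0}$. Then $Q=\min\{2x-2t,\,x-\tfrac t2\}$ and $\Qepsm=\min\{2x-2t-2\eps,\,x-\tfrac t2-\tfrac\eps2\}$. So the inf-convolution keeps the concave kink, i.e.\ the shock, at $x=\tfrac32(t+\eps)$. Set $\theta(t,x)\coloneqq\int_{x-\frac32(t+\eps)}^{\infty}\phi_\teps(y)\dd y$. Then $\partial_2\Leps=1+\theta$ and $\partial_1\Leps=-\tfrac12-\tfrac32\theta$, hence
\[
\partial_1\Leps+f(\partial_2\Leps)=-\tfrac12\,\theta(1-\theta),
\]
which equals $-\tfrac18$ on the shock curve.

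\begin{itemize}
\item Since $\Leps$ is smooth here, it is not a viscosity supersolution.
\item $|\partial_2^2\Leps|\lesssim 1/\eps$ gives $|\Wmk[\partial_2\Leps]-\partial_2\Leps|+|\Wpk[\partial_2\Leps]-\partial_2\Leps|\lesssim(\mmk+\mpk)/\eps$. So the claimed lower bound for $\Leps$ fails as soon as $(\mmk+\mpk)/\eps$ is small. That is exactly the regime $\eps=\eps^\ast(k)$ used in \cref{theo:convergence_order}.
\item Your fallback fails as well. The defect is of order one (only its support has width $\mathcal O(\eps)$), not $\mathcal O(\eps)$, so it cannot be absorbed by the linear time shift.
\item The representation via shifts also fails. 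One only has $(\inf_z g_z)*\phi_\teps\le\inf_z(g_z*\phi_\teps)$, not equality, and each mollified translate carries the same wrong-sign Jensen defect.
\end{itemize}

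The paper's own proof does not resolve this either; it only says the $\Leps$ case is analogous. So your diagnosis that Jensen runs the wrong way is correct. A valid argument needs a different supersolution on this side that avoids spatial mollification. One candidate is the semiconcave supersolution $\Qepsm$ itself, whose semiconcavity gives the reversed one-sided bounds of \cref{eq:Wmpk_partial_2_U_eps} directly. It would have to be combined with a comparison argument for the nonlocal equation that accepts a nonsmooth supersolution.
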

\begin{proof}
    For a given $\Phi \in \mC^{\infty}((0,T)\times \R)$, let $(t_0,x_0) \in (0,T)\times \R$ be a local maximum of $\Qepsp - \Phi$. If there is no local maximum of $\Qepsp - \Phi$ on $(0,T)\times \R$, then there is nothing to show for this specific $\Phi$. By construction, there exists a $y_0$ such that $\Qepsp(t_0,x_0) = Q(t_0,y_0) - \tfrac{(x_0-y_0)^2}{2\eps}$, and we have, $\forall (t,x,y) \in (0,T) \times \R^2$, that
    \begin{align}
        \Qepsp(t,x) \geq Q(t,y) - \tfrac{(x-y)^2}{2\eps}.
    \end{align}
    After choosing $x = x_0+y-y_0$, this yields
    \(
       Q(t,y) \leq  \Qepsp(t,x_0+y-y_0) + \tfrac{(x_0-y_0)^2}{2\eps}
    \).  Now, subtracting $\Psi(t,y) \coloneqq \Phi(t,x_0+y-y_0) + \tfrac{(x_0-y_0)^2}{2\eps}$
    on both sides results in
     \begin{align}
       Q(t,y) - \Psi(t,y) \leq  \Qepsp(t,x_0+y-y_0) - \Phi(t,x_0+y-y_0).
    \end{align}
    By construction, $\Qepsp - \Phi$ is locally maximal in $(t_0,x_0)$; thus we have, for $(t,y)$ in the vicinity of $(t_0,y_0)$,
    \begin{align}
        \Qepsp(t,x_0+y-y_0) - \Phi(t,x_0+y-y_0) \leq \Qepsp(t_0,x_0) - \Phi(t,x_0).
    \end{align}
    We then obtain
\begin{align}
       Q(t,y) - \Psi(t,y) \leq  \Qepsp(t_0,x_0) - \Phi(t,x_0) &= Q(t_0,y_0) - \tfrac{(x_0-y_0)^2}{2\eps} - \Phi(t,x_0) \\
       &= Q(t_0,y_0)-\Psi(t_0,y_0)
    \end{align}
    and thus $Q-\Psi$ has a local maximum at $(t_0,y_0)$. Since $Q$ is a subsolution, it holds that
    \(
        \partial_1 \Psi(t_0,y_0) + f(\partial_2 \Psi(t_0,y_0)) \leq 0,
    \)
    and because the first derivatives of $\Psi(t_0,y_0)$ and $\Phi(t_0,x_0)$ coincide, we obtain 
        \(
        \partial_1 \Phi(t_0,x_0) + f(\partial_2 \Phi(t_0,x_0)) \leq 0.
    \)
    Thus, $\Qepsp$ is also a subsolution. According to \cite[Prop.~1.9.]{bardi2009optimal} and Rademacher's theorem, $\Qepsp$ satisfies 
    \begin{align}
        \partial_1 \Qepsp + f\big(\partial_2 \Qepsp\big) \leqq 0 \ \text{a.e.}\label{eqn:strong_HJ}
    \end{align}
    To obtain the subsolution property of $\Ueps$, we make use of the convexity of $f$:
    \begin{align}
        f(\partial_2 \Ueps(t,x)) &= f\Big( \big(\partial_2 \Qepsp (t,\cdot) * \phi_\teps\big)(x)\Big) \leq \Big(f(\partial_2 \Qepsp (t,\cdot) ) * \phi_\teps\Big)(x) \\
        \partial_t \Ueps(t,x) &= \partial_t \Big(\Qepsp(t,\cdot) * \phi_\teps\Big)(x) =  \Big(\partial_t \Qepsp(t,\cdot) * \phi_\teps\Big)(x).
    \end{align}
    Thus, by convolution of \cref{eqn:strong_HJ} with $\phi_\teps$, we obtain
        \begin{align}
        \partial_1 \Ueps + f(\partial_2 \Ueps) \leqq 0 \ \text{a.e.} \label{eq:super_sol_U}
    \end{align}
    and consequently---again by \cite[Prop.~1.9.]{bardi2009optimal}---$\Ueps$ is a subsolution. 

For the claimed inequalities, we obtain from $\partial_1 V \geq 0$, $\partial_2 V \leq 0$, $V\in \mW^{1,\infty}((\qmin,\qmax)^2)$ and \cref{lem:smooth_approx} the following:
    \begin{equation}
        V(\Wmk[\partial_2 \Ueps](t,x),\Wpk[\partial_2 \Ueps](t,x)) \leq V\Big(\partial_x \Ueps(t,x)+\tfrac{\mmk}{\eps},\partial_x \Ueps(t,x)-\tfrac{\mpk}{\eps}\Big).
    \end{equation}
Choosing $k$ large enough that $\mmk,\mpk \leq \eps$, we find that
    \begin{multline}
        V(\Wmk[\partial_2 \Ueps](t,x),\Wpk[\partial_2 \Ueps](t,x)) \leq V\big(\partial_x \Ueps(t,x),\partial_x \Ueps(t,x)\big) \\+ \|\partial_1 V\|_{\mL^\infty((\qmin-1,\qmax+1)^2)}\tfrac{\mmk}{\eps}  
        + \|\partial_2 V\|_{\mL^\infty((\qmin-1,\qmax+1
        )^2)}\tfrac{\mpk}{\eps}.
    \end{multline}
    Multiplying both sides by $\partial_2 \Ueps$, which satisfies $0 \leqq \partial_2 \Ueps \leqq \qmax$, we obtain
    \begin{align}
   \partial_x \Ueps(t,x) V\big(\Wmk[\partial_2 \Ueps](t,x),\Wpk[\partial_2 \Ueps](t,x)\big)
\leq
f(\partial_x \Ueps(t,x))
+
\sCe \tfrac{\mmk+\mpk}{\eps}.
\end{align}  
    This, together with \cref{eq:super_sol_U}, results in the claimed inequality for $\Ueps$. The arguments for $\Leps$ are analogous.
\end{proof}

This now allows us to combine the preceding lemmas and prove the main
rate-of-convergence result for the nonlocal approximation.

\begin{theo}[rate of convergence of the nonlocal approximation]\label{theo:convergence_order} Let $\Qk$ and $Q$ be antiderivatives of solutions $\qk$ to the nonlocal conservation law in \cref{eq:nonlocal_c_l} and the entropy solution \(q\)  of \cref{eq:local_c_l}, respectively.
Then, it holds for sufficiently large \(k\in\N\) that
\begin{align}
    \|\Qk - Q\|_{\mL^{\infty}((0,T)\times \R)} \leq \sCz \big(\mmk+\mpk\big)^{\nicefrac{1}{2}}
\end{align}
with $\sCz\coloneqq 2 \qmax\big(T\big(2 + \qmax\big)\|\nabla V\|_{\mL^{\infty}((\qmin,\qmax)^2)}\big)^{\nicefrac{1}{2}}$ and $\mmk$ and $\mpk$ as in \cref{eqn:truncated_first_moments}. In particular, it holds for every large enough \(k\in\N\) that
\begin{align}
    \|\qk- \qs\|_{\mL^\infty((0,T);\mW^{-1,\infty}(\R))} \leq \sCz \, \big(\mmk+\mpk\big)^{\nicefrac{1}{2}}.
\end{align}

\end{theo}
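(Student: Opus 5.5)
The idea is to run a comparison principle for the nonlocal \HJ equation. By \cref{lem:error_nonlocal_H_J}, $\Ueps$ and $\Leps$ are approximate sub- and supersolutions of it, with defect $\sCe(\mmk+\mpk)/\eps$. We then optimise over $\eps$.

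First, the $\talphak$ normalisation is already built into the antiderivatives, so we may take $\Qk$ to satisfy \cref{eqn:nonlocal_HJ_solution}. By \cref{item:2} of \cref{theo:existence_uniqueness_nonlocal} and the smoothing in \cref{item:3}, we may assume $\Qk$ is smooth with $0\le\partial_2\Qk\le\qmax$. Fix $\eps\in(0,1/\qmax)$ and $k$ so large that $\mmk,\mpk\le\eps$, so that \cref{lem:error_nonlocal_H_J} applies. Set $\delta_k\coloneqq\sCe(\mmk+\mpk)/\eps$ and consider the shifted function $U\coloneqq\Ueps-\delta_k t$.

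Next we compare $U$ with $\Qk$. Let $w\coloneqq U-\Qk$. We want $\sup w(t,\cdot)\le\sup w(0,\cdot)$, where $\sup w(0,\cdot)\le \sCn\eps$ by \cref{eq:U_eps_Q_approximation}. Suppose a first interior maximum of $w$ (after the usual penalisation $-\beta t-\eta x^2$ to handle the unbounded domain) occurs at $(t_*,x_*)$. There $\partial_2 U=\partial_2\Qk$ and $\partial_1U\ge\partial_1\Qk$, up to penalisation terms. Moreover, $x_*$ maximises $U(t_*,\cdot)-\Qk(t_*,\cdot)$, so $U(t_*,y)-U(t_*,x_*)\le\Qk(t_*,y)-\Qk(t_*,x_*)$ for all $y$. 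Writing $\Wpk[\partial_2 U]$ via integration by parts against $(\gammapk)'\ge0$, exactly as in part (c) of the proof of \cref{theo:singular_limit_convergence}, this gives $\Wpk[\partial_2 U](t_*,x_*)\le\Wpk[\partial_2\Qk](t_*,x_*)$. The analogous argument gives $\Wmk[\partial_2 U](t_*,x_*)\ge\Wmk[\partial_2\Qk](t_*,x_*)$. Since $\partial_1V\ge0$, $\partial_2V\le0$ and $\partial_2U=\partial_2\Qk\ge0$ at this point, the velocity evaluated along $U$ dominates the one along $\Qk$. Combining with the two equations then yields
\[
\partial_1U+\partial_2U\,V(\cdots)\le -\delta_k+\delta_k<\partial_1\Qk+\partial_2\Qk\,V(\cdots)+\beta ,
\]
which is the desired contradiction. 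Hence $\Ueps-\Qk\le\sCn\eps+\delta_kT$, and by \cref{eq:U_eps_Q_approximation} again, $Q-\Qk\le 2\sCn\eps+T\sCe(\mmk+\mpk)/\eps$. The symmetric argument with $\Leps$ gives the lower bound.

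The \textbf{main obstacle} is making this maximum-principle step rigorous. $\Ueps$ is only Lipschitz in $t$, so the pointwise inequality holds only a.e. I would first try doubling the time variable, or mollifying in $t$ as well; the latter is harmless because the convexity argument in \cref{lem:error_nonlocal_H_J} survives time convolution. The nonlocal comparison needs the global (not local) maximality in $x$, which the penalised sup supplies. Finally, choosing $\eps\sim(\mmk+\mpk)^{1/2}$ balances the two terms and gives $\sCz(\mmk+\mpk)^{1/2}$ with constants of the stated form. The bound in $\mL^\infty((0,T);\mW^{-1,\infty})$ then follows directly, since $\qk-q=\partial_2(\Qk-Q)$ by \cref{theo:rel_HJ_cl}.
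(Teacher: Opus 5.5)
Your proposal follows the paper's proof. You shift $\Ueps$ (and $\Leps$) by $\sCn\eps$ and by the linear-in-time defect from \cref{lem:error_nonlocal_H_J}, compare with $\Qk$ through a maximum principle that uses global spatial maximality, integration by parts against the monotone kernels and $\partial_1V\ge 0\ge\partial_2V$, and then combine with \cref{lem:smooth_approx} and optimize in $\eps$; you in fact spell out the comparison step that the paper only sketches via $m(t)=\sup_x(\uUeps-\Qk)(t,x)$.

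One small correction: penalize with a Lipschitz weight such as $\eta\sqrt{1+x^2}$ instead of $\eta x^2$. The kernels need not have finite first moments, which is why only the truncated moments $\mmk,\mpk$ appear, so an $x^2$ penalty can make its contribution to $\Wpk,\Wmk$ infinite. A Lipschitz penalty contributes only $O(\eta)$, and it suffices because $\Ueps-\Qk$ is bounded.
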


\begin{proof}
    To prove the convergence rate, we need to shift the derived $\Ueps$ and $\Leps$ to obtain upper and lower bounds for $\Qk$. Recall from \cref{lem:smooth_approx} that $\sCn \coloneqq \qmax + \tfrac{\qmax^2}{2}$, and for the sake of simplicity, we define
\begin{align*}
    \sCkeps \coloneqq \qmax\|\nabla V\|_{\mL^{\infty}((\qmin,\qmax)^2)}\tfrac{\mmk+\mpk}{\eps}. \quad 
\end{align*}
    We shift $\Ueps$ and $\Leps$ as follows:
    \begin{align}
        \uUeps \coloneqq \Ueps - \sCkeps t - \sCn \eps, \qquad \oLeps \coloneqq \Leps + \sCkeps t + \sCn \eps.
    \end{align}
By \cref{lem:smooth_approx}, $\Ueps(0,x) \leq Q(0,x) + \sCn \eps and \Leps(0,x) \geq Q(0,x) - \sCe \eps \ \forall x$, and we find, as a result of the shift by $\pm \sCe \eps$ for the initial datum, that
\(
    \uUeps(0,x) \leq Q(0,x) = \Qk(0,x) \leq \oLeps(0,x).
\)
Then, because of the linear shift in time \cref{lem:error_nonlocal_H_J}, we have
\begin{align}
    \partial_1 \uUeps + \partial_2 \uUeps V(\Wmk[\partial_2 \uUeps],\Wpk[\partial_2 \uUeps]) &\leqq 0.
\end{align}
By the comparison principle, which can be shown by considering the dynamics of $m(t) \coloneqq \sup_{x\in \R}\big(\uUeps(t,x) - \Qk(t,x)\big)$, the nonnegativity of the kernels $\gammamk$ and $\gammapk$, the monotonicity structure of $V$ (i.e., $\partial_1 V \geqq 0$, $\partial_2 V \leqq 0$), and the order structure in the initial datum, we can show that $\uUeps \leqq \Qk$. 
Again, by analogous arguments, we obtain $\Qk \leqq \oLeps$. By the definition of $\uUeps$ and the order principle of $Q$, we have
\(
    \Ueps - \sCkeps t - \sCn \eps + Q \leqq \Qk + Q
\)
and thus
\begin{align}
    Q-\Qk &\leq |Q-\Ueps| + \sCkeps t + \sCn \eps. \\
    \intertext{By \cref{lem:smooth_approx}, $t\in (0,T)$ and the definitions of $\sCkeps$, this becomes}
    Q-\Qk& \leq 2\sCn \eps + T \sCe \tfrac{\mmk + \mpk}{\eps}.
\end{align}
Because we obtain a similar result by considering $\overline \Leps$, we have
\begin{align}
    \|Q-\Qk\|_{\mL^\infty((0,T)\times \R)} \leq  2\sCn \eps + T\sCe\tfrac{\mmk + \mpk}{\eps}.
\end{align}
By minimizing the right-hand side over $\eps\in \R_{> 0}$, this leads to \[\eps^{\ts} = \Big(\tfrac{T\sCe(\mmk + \mpk)}{2\sCn}\Big)^{\nicefrac{1}{2}},\] which satisfies $\varepsilon \leq \qmax^{-1}$ for all $k\in \N$ large enough and by recalling the definitions of $\sCn$ and $\sCe$, that is, $\sCn \coloneqq \qmax + \tfrac{\qmax^2}{2},\sCe \coloneqq \qmax\|\nabla V\|_{\mL^\infty((\qmin,\qmax)^2)},$ the claim regarding the rate of the uniform convergence of $\Qk$ holds. 

 Since
\(
\partial_2(\Qk-Q)=\qk-\qs
\)
in the sense of distributions, we obtain for every
\(\phi\in W^{1,1}(\R)\) and a.e.\ in \(t\in(0,T)\) that
\begin{align}
\bigg|\int_\R(\qk-\qs)(t,x)\phi(x)\,\dd x\bigg|
&=
\bigg|\int_\R(\Qk-Q)(t,x)\partial_x\phi(x)\,\dd x\bigg|.
\intertext{Because \(\|\phi\|_{W^{1,1}(\R)}\leq1\), we find that}
\bigg|\int_\R(\qk-\qs)(t,x)\phi(x)\,\dd x\bigg|
&
\leq \|\phi\|_{W^{1,1}(\R)}\|\Qk-Q\|_{L^\infty((0,T)\times \R)}.
\end{align}
Taking the supremum over such \(\phi\) with \(\|\phi\|_{W^{1,1}(\R)} \leq 1\) 
yields the claim.
\end{proof}

\begin{rem}[convergence order for the ``classical'' scaling law for the nonlocal kernels]
    For $\gammam \in \mL^{1}(\R_{>0}) \cap \mL^\infty(\R_{>0})$ and $\gammap \in \mL^{1}(\R_{<0}) \cap \mL^\infty(\R_{<0})$ satisfying \cref{ass:general} with bounded first moments (i.e., $\|x \mapsto x \gammam(x)\|_{\mL^1(\R_{>0})} \eqqcolon \mm$), $\|x \mapsto x \gammap(-x)\|_{\mL^1(\R_{>0})} \eqqcolon \mp$ defines the kernel sequences $\gammamk \equiv k \gammam(k \, \cdot), \gammapk \equiv k \gammap(k \, \cdot) \ \forall k\in\N$. Then, we obtain, by a change of variable in the integration, $\mmk = k^{-1}\mm$ and $\mpk = k^{-1}\mp$. Then, from \cref{theo:convergence_order}, we find that for large enough \(k\in\N\)
    \begin{align}
    \|\Qk - Q\|_{\mL^{\infty}((0,T)\times \R)} \leq \tilde{\mathsf C}_{\tz}  \, k^{-\nicefrac{1}{2}},
\end{align}
with $\tilde{\mathsf C}_{\tz}\coloneqq 2 \qmax \Big(T\big(\mm+\mp\big)\big(2 + \qmax\big)\|\nabla V\|_{\mL^{\infty}((\qmin,\qmax)^2)}\Big)^{\nicefrac{1}{2}}$.
\end{rem}
    

\end{document}